\numberwithin{equation}{section}
\newtheorem{theorem}{\textbf{Theorem}}[section]
\newtheorem{theorem*}{\textbf{Theorem}}
\newtheorem{proposition}[theorem]{\textbf{Proposition}}
\newtheorem{lemma}[theorem]{\textbf{Lemma}}
\newtheorem{claim}[theorem]{\textbf{Claim}}
\newtheorem{example}[theorem]{\textbf{Example}}
\newtheorem{conjecture}[theorem]{\textbf{Conjecture}}
\newtheorem{definition/proposition}[theorem]{\textbf{Definition/Proposition}}
\def\N{{\mathbb N}}
\def\R{\mathbb{R}}
\def\Z{{\mathbb Z}}
\def\D{{\mathbb D}}
\def\Q{{\mathbb Q}}
\def\cA{{\mathcal A}}
\def\cI{{\mathcal I}}
\def\rd{{\rm d}}
\def\la{\langle\,}
\def\ra{\,\rangle}
\DeclareMathOperator{\cw}{cw}
\DeclareMathOperator{\wl}{wl}
\newcommand{\Addresses}{{% additional braces for segregating \footnotesize
		\bigskip
		\footnotesize

        Youlin Li, \par\nopagebreak
	    \textsc{School of Mathematical Sciences, Shanghai Jiao Tong University, China}\par\nopagebreak
		\textit{E-mail address}: \href{mailto:liyoulin@sjtu.edu.cn}{liyoulin@sjtu.edu.cn}

        \bigskip

	    Zhengyi Zhou, \par\nopagebreak
	    \textsc{Morningside Center of Mathematics, Chinese Academy of Sciences;}\par\nopagebreak
         \textsc{Academy of Mathematics and Systems Science, Chinese Academy of Sciences, China}\par\nopagebreak
		\textit{E-mail address}: \href{mailto:zhyzhou@amss.ac.cn}{zhyzhou@amss.ac.cn}

}}
\title{Algebraically overtwisted tight $3$-manifolds from $+1$ surgeries}
\author{Youlin Li and Zhengyi Zhou}
\begin{document}
	\maketitle
\begin{abstract}
We execute Avdek's algorithm to find many algebraically overtwisted and tight $3$-manifolds by contact $+1$ surgeries. In particular, we show that a contact $1/k$ surgery on the standard contact 3-sphere  along any Legendrian positive torus knot with the maximal Thurston–Bennequin invariant yields an algebraically overtwisted and tight $3$-manifold, where $k$ is a positive integer. 
\medskip

\noindent
MSC(2020):53D10,53D42
\end{abstract}
%\tableofcontents
\section{Introduction}
It is a fundamental question to understand the boundary between flexibility and rigidity phenomena in symplectic and contact topology. An example in the context of contact structures is whether overtwisted contact structures can be characterized using holomorphic curves. One natural candidate is the contact Ozsv\'ath-Szab\'o invariant in dimension $3$ \cite{OS}, as its vanishing is a necessary condition for overtwistedness. However, it is not a sufficient condition \cite{GHv}. Another natural candidate, which works for any dimension, is the vanishing of the contact homology, as Bourgeois and van Koert \cite{BvK} showed that contact homology vanishes for any overtwisted contact manifold. Therefore Bourgeois and Niederkr{\"u}ger \cite{AOT} introduced the concept of algebraically overtwisted manifolds to mean those contact manifolds with vanishing contact homology. The insufficiency of algebraic overtwistedness to determine tightness was obtained quite recently by Avdek \cite{avdek2020combinatorial} in dimension $3$ by showing that contact $1/k$ ($k\in\N_+$, i.e.\ $k$ is a positive integer) surgery along a Legendrian right-handed trefoil with the maximal Thurston–Bennequin invariant is algebraically overtwisted, yet still tight by \cite{LS}. In this note, we execute Avdek's algorithm to find more algebraically overtwisted but tight $3$-manifolds from contact $+1$ surgeries. Let $\Lambda$ be a Legendrian knot in $(S^{3}, \xi_{std})$,  we denote the contact 3-manifold obtained by contact $1/k$ surgery along $\Lambda$  by $(S^{3}_{1/k}(\Lambda), \xi_{1/k}(\Lambda))$. We first consider the case where the Legendrian knots are positive torus knots.

\begin{theorem}\label{thm:main}
    Let $\Lambda$ be a non-trivial Legendrian positive torus knot with  maximal Thurston–Bennequin invariant in $(S^{3}, \xi_{std})$. Then $(S^{3}_{1/k}(\Lambda), \xi_{1/k}(\Lambda))$ is algebraically overtwisted and tight for $k\in \N_+$.%$S^3_{\Lambda^+}$ from applying contact $+1$ surgery on $\Lambda \subset (S^3,\xi_{std})$ is algebraically overtwisted and tight.
\end{theorem}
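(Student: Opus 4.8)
The plan is to treat the two conclusions separately: \emph{algebraic overtwistedness} by an explicit chain-level computation of contact homology following \cite{avdek2020combinatorial}, and \emph{tightness} by a Floer-theoretic argument that is independent of contact homology.

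For the algebraic overtwistedness I would first put the surgery into the form processed by Avdek's algorithm. By the Ding--Geiges surgery calculus, contact $1/k$-surgery along $\Lambda$ is contactomorphic to contact $(+1)$-surgery along $k$ parallel Legendrian pushoffs of $\Lambda$, so starting from the front of the maximal-$tb$ representative of $T(p,q)$ (for which $tb = pq-p-q = 2g-1$, with $g$ the Seifert genus) I obtain a diagram built entirely from $(+1)$-surgeries. Avdek's algorithm then provides a contact form on $(S^3_{1/k}(\Lambda),\xi_{1/k}(\Lambda))$ whose Reeb orbits near the surgery locus are encoded combinatorially by the front, together with a combinatorial model for the contact homology differential as a signed count of rigid holomorphic configurations supported near the surgery regions.

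The goal is then to exhibit the unit of the contact homology algebra as a boundary. The mechanism, already present in Avdek's trefoil computation, is that each $(+1)$-surgery produces a distinguished short Reeb orbit in the surgery torus bounding a single rigid holomorphic plane, so that at chain level the coefficient of the empty word in its differential is $\pm 1$; after the remaining terms are shown to vanish for index reasons or to cancel against the augmentation coming from the fillable complement of the surgery region, the unit becomes exact and hence $\CH = 0$. I expect this to be the main obstacle: for $T(2,3)$ the count is carried out by hand in \cite{avdek2020combinatorial}, but for general $(p,q)$ and general $k$ the number of Reeb orbits and the bookkeeping of the combinatorial curves grow, and the difficulty is to organize the cancellations so that the coefficient of the unit is exactly $\pm 1$ uniformly in $(p,q,k)$.

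Finally, tightness must be proven independently, since vanishing of $\CH$ gives no information about genuine tightness. By Moser's theorem the underlying manifold $S^3_{1/k}(\Lambda)$ is a small Seifert fibered space, a rational homology sphere with $|H_1| = k(pq-p-q)+1$. I would establish tightness of $\xi_{1/k}(\Lambda)$ by showing that its Ozsv\'ath--Szab\'o contact invariant is non-zero, via the surgery formula for the contact invariant under contact $(+1)$-surgery along a maximal-$tb$ Legendrian knot---exactly as in the trefoil case treated by \cite{LS}---or, alternatively, by matching $\xi_{1/k}(\Lambda)$ with a fillable contact structure under the classification of tight contact structures on such Seifert fibered spaces. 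This step is expected to follow from existing surgery formulas and classification results; the genuinely new content lies in the contact-homology vanishing above.
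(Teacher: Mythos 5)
Your overall architecture matches the paper's in outline (Avdek's chord-to-orbit model, a distinguished short orbit whose differential has constant term $\pm 1$ coming from an embedded disk, tightness via \cite{LS} using $tb=pq-p-q=2g_s-1$), but the step you yourself flag as ``the main obstacle'' --- showing that the non-constant terms of the differential of the distinguished generator vanish --- is exactly the content of the paper's proof, and your proposal contains no mechanism for it. The paper's argument has three ingredients you are missing. First, one arranges the Lagrangian projection so that the disk $B_1$ adjacent to the crossing $\alpha_1$ has very small area; by the action estimate (\Cref{prop:period}) this forces any output of $\partial_{CH}(q_{(\alpha_1)})$ to be built only from the short chords $r_{i,j}$ in the braid area. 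Second, for rainbow closures of positive braids the Conley--Zehnder index of $(c_1\ldots c_n)$ is bounded below by the word length (\Cref{prop:positive}), so the SFT-degree-$0$ part of the algebra is generated by single-chord orbits; this reduces the problem to finitely many generators. Third --- and this is the heart of the proof --- one uses Avdek's intersection grading $\cI_{\Lambda}$ together with positivity of intersection with the holomorphic cylinders $\R_s\times(x_i,y_i)\times\R_z$ (\Cref{prop:no_curve}) to show that the system $\cI_{\Lambda}(\alpha_1)-\sum x_{i,j}\,\cI_{\Lambda}(r_{i,j})\ge 0$, $x_{i,j}\ge 0$, has only the trivial solution; this is done by a combinatorial analysis of the push-out loops $P_{i,j}$ (\Cref{lemma:sum,lemma:A2}). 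Your alternative suggestion that the remaining terms might ``cancel against the augmentation coming from the fillable complement'' is not what happens and is not a workable substitute: the point is that no non-constant terms occur at all, which is a geometric exclusion, not an algebraic cancellation.

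A second, structural divergence: you propose to convert the $1/k$-surgery into $k$ parallel $(+1)$-surgeries \emph{first} and then run Avdek's algorithm on the resulting $k$-component link. The paper deliberately avoids this: it runs the algorithm only for the single $(+1)$-surgery along $\Lambda$, proves $\partial_{CH}(q_{(\alpha_1)})=\pm1$ there, and then deduces vanishing of contact homology for the $1/k$-surgery from the Ding--Geiges description via the Liouville cobordism from $(S^3_{1/k}(\Lambda),\xi_{1/k}(\Lambda))$ to $(S^3_{\Lambda^+},\xi_{\Lambda^+})$ and functoriality of contact homology. This removes all $k$-dependence from the combinatorics; your route would force you to redo the (already delicate) orbit and intersection bookkeeping uniformly in $k$, which is precisely the difficulty you acknowledge but do not resolve. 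The tightness part of your proposal is essentially the paper's argument and is fine.
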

%\begin{corollary}
    %Applying a contact $1/k$ surgery to a Legendrian $(p,q)$ torus knot with the maximal Thurston–Bennequin invariant in $(S^3,\xi_{std})$ always yields an algebraically overtwisted tight contact manifold for $k\in \N$.
%\end{corollary}
We also consider the case where the Legendrian knots are rainbow closures of some positive braids, see Figure~\ref{fig:twistedtorus_intro} for an example. The notion of Legendrian rainbow closures of positive braids was introduced in \cite[Section 6.5]{STZ} using front diagrams. According to \cite[Theorem 2]{T}, a Legendrian rainbow closure of a positive braid attains the maximal Thurston-Bennequin invariant of the underlying topological knot type.

\begin{theorem}\label{thm:2}
    Let $\sigma_1$, $\sigma_2$,  $\cdots$, $\sigma_{p-1}$ be the generators of the $p$-strand braid group, and let $\Lambda$ be a Legendrian knot that is the Legendrian rainbow closure of a positive braid $$(\sigma_{1}\cdots\sigma_{p-2}\sigma_{p-1})^{q_1}(\sigma_{p-p_{2}+1}\cdots\sigma_{p-2}\sigma_{p-1})^{q_2}\cdots (\sigma_{p-p_{N}+1}\cdots\sigma_{p-2}\sigma_{p-1})^{q_N}, q_i>0.$$
    %Let $\Lambda^+$ be braid closure of $p$-strands, started by $(p_1=p,q_1)$ torus knot and followed by $(p_i,q_i)$ torus knot for $2\le i \le N$ on the bottom $p_i$-adjacent strands, 
    Then $(S^{3}_{1/k}(\Lambda), \xi_{1/k}(\Lambda))$ is algebraically overtwisted and  tight for $k\in \N_+$ when $q_1\gg 0$. When $N=2$ and $p_2=p-1$, $(S^{3}_{1/k}(\Lambda), \xi_{1/k}(\Lambda))$ is algebraically overtwisted and  tight for $k\in \N_+$ if $q_1>1$.
\end{theorem}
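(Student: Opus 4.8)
The plan for Theorem~\ref{thm:2} splits naturally along the two claims of the statement, and both exploit the same two-part strategy already visible in Theorem~\ref{thm:main}: prove algebraic overtwistedness by explicitly running Avdek's surgery algorithm, and prove tightness by exhibiting a symplectic filling (or invoking a non-vanishing classical invariant that overtwistedness would forbid). Let me sketch each ingredient in turn.

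<br>

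For \textbf{algebraic overtwistedness}, I would set up the computation as follows. Avdek's algorithm computes the contact homology of $(S^3_{1/k}(\Lambda),\xi_{1/k}(\Lambda))$ from a combinatorial model attached to the Legendrian $\Lambda$ — specifically, from the Chekanov–Eliashberg DGA of $\Lambda$ together with the extra generators and differentials introduced by the $+1$-surgery handle. Showing algebraic overtwistedness means exhibiting a class in the contact homology algebra whose differential equals $1$ (so that the unit is exact and the whole algebra collapses). The rainbow closure of a positive braid has a very clean front diagram, so the first step is to read off the Reeb chords (equivalently, the DGA generators) from the grid/front of $\Lambda$, and then to identify among the surgery-augmented generators a distinguished Reeb orbit $\gamma$ coming from the surgery torus whose boundary $\partial\gamma$ contains the augmentation unit $1$ after the relevant holomorphic disks are counted. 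Because the braid is positive, the Chekanov–Eliashberg differential has no "negative crossing" contributions, which should make the disk count tractable and the appearance of $1$ in $\partial\gamma$ traceable to a single embedded disk or an explicitly enumerable family.

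<br>

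For \textbf{tightness}, the natural tool is Stein/symplectic fillability or a non-trivial contact invariant. Since $\Lambda$ is a positive braid rainbow closure, it attains the maximal Thurston–Bennequin invariant by \cite[Theorem 2]{T}, and a $+1$ (i.e.\ $1/k$) contact surgery along a Legendrian realizing $\mathrm{tb}_{\max}$ of a fibered knot supporting the tight structure tends to preserve tightness via an open-book or Heegaard–Floer argument. I would argue tightness either by showing the resulting open book is right-veering (hence tight by Honda–Kazez–Mati\'c), or by computing that the contact Ozsv\'ath–Szab\'o invariant $c(\xi_{1/k}(\Lambda))$ is non-zero, exactly paralleling how \cite{LS} handled the trefoil case. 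The positivity and the largeness hypothesis $q_1\gg 0$ (respectively $q_1>1$ in the special case $N=2,\,p_2=p-1$) should enter precisely here: they guarantee enough positive twisting that the monodromy of the supporting open book remains right-veering after surgery.

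<br>

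I expect the \textbf{main obstacle} to be the algebraic-overtwistedness disk count rather than tightness. The rainbow closure of a general word $(\sigma_{p-p_1+1}\cdots\sigma_{p-1})^{q_1}\cdots(\sigma_{p-p_N+1}\cdots\sigma_{p-1})^{q_N}$ produces a front with many crossings, and Avdek's algorithm introduces further generators from the surgery region; bookkeeping the full differential and verifying that some generator's boundary is $1$ will require a careful combinatorial induction on the braid word. The role of the threshold on $q_1$ is subtle: it is exactly the quantity that must be large enough to force the appearance of the unit in the differential (or, in the $N=2,\,p_2=p-1$ case, the more degenerate geometry lets $q_1>1$ suffice). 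I would therefore first treat the homogeneous case $N=1$ (the pure torus-knot braid $(\sigma_1\cdots\sigma_{p-1})^{q_1}$), where the computation reduces to the setting of Theorem~\ref{thm:main}, and then feed in the remaining blocks $(\sigma_{p-p_i+1}\cdots\sigma_{p-1})^{q_i}$ one at a time, checking that each additional block only adds generators in degrees that do not destroy the witness for $\partial(\,\cdot\,)=1$. Establishing that the witnessing disk survives the addition of all later blocks — i.e.\ that no cancelling term appears — is the crux of the argument.
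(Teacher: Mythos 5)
Your outline captures the right general shape (Avdek's algorithm for algebraic overtwistedness; maximality of $tb$ plus \cite{LS} for tightness), but it is missing the two ideas that make the paper's proof actually close, and it misplaces where the hypotheses on $q_1$ are used. Tightness in the paper requires no right-veering or open-book argument and no largeness of $q_1$: \Cref{lem:tight} computes $tb(\Lambda)=w-p$ from the Lagrangian projection, combines this with the Seifert surface of the positive braid closure to get $w-p=tb(\Lambda)\le 2g_s(\Lambda)-1\le 2g_3(\Lambda)-1\le w-p$, concludes $tb(\Lambda)=2g_s(\Lambda)-1$, and invokes \cite{LS} directly; this works for all $q_i>0$ with $tb(\Lambda)\ne-1$. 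The hypotheses $q_1\gg0$ (resp.\ $q_1>1$) enter only in the contact homology computation, not in tightness as your third paragraph asserts.

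More seriously, your plan for algebraic overtwistedness has no mechanism for showing that $\partial_{CH}$ of the chosen generator equals $\pm1$ \emph{exactly}, i.e.\ for excluding all non-constant terms. The constant term $\pm1$ comes for free from the embedded RSFT disk over the small region bounded by $\alpha_2$ (\Cref{thm:curve}); the entire difficulty is ruling out the other monomials. The paper does this with two tools you do not mention: (i) \Cref{prop:positive}, which shows that for rainbow closures of positive braids the Conley--Zehnder index of $(c_1\ldots c_n)$ is at least $n$, so the SFT-degree-$0$ subalgebra is generated by single-chord orbits only (without this, as the $5_2$ examples in the paper show, that subalgebra can have infinitely many generators and the method breaks down); and (ii) the intersection grading $\cI_{\Lambda}$ together with positivity of intersection against the holomorphic cylinders $\R_s\times(x_i,y_i)\times\R_z$ (\Cref{prop:no_curve}), which converts the vanishing of the remaining terms into a system of linear inequalities in the exponents. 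It is precisely in showing this system has only the trivial solution --- by summing coefficients over the top row of regions (\Cref{lemma:top_row}) and then estimating the coefficient of $B_1$, where one needs the number of step-ups per strand to be small relative to $tb(\Lambda)+1$ --- that $q_1\gg0$ (and $q_1>1$ when $N=2$, $p_2=p-1$) is used, in \Cref{pro:q1>>0} and \Cref{pro:N=2}. Finally, your proposed block-by-block induction is not obviously well-founded: adding a block changes $tb(\Lambda)+1$ and hence rescales $\cI_{\Lambda}(\mu)$ globally, so the inequalities established for earlier blocks do not persist verbatim; the paper instead argues directly with the full braid word.
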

\begin{figure}[htb] {\small
\begin{overpic}[scale=0.5]
{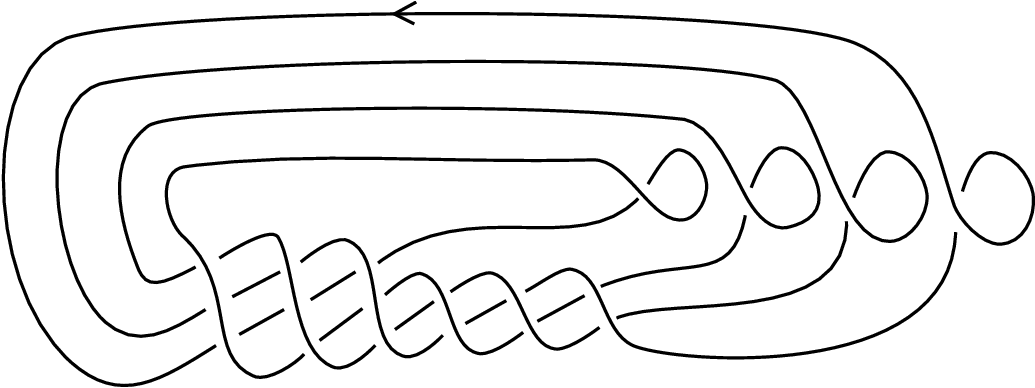}
\end{overpic}}
\caption{A Legendrian knot which is the rainbow closure of the braid $(\sigma_1\sigma_2\sigma_3)^{3}(\sigma_2\sigma_3)^{3}.$  }
\label{fig:twistedtorus_intro}
\end{figure}

All the knots discussed above, including the positive torus knots, are Legendrian rainbow closures of positive braids, which attain the maximal Thurston–Bennequin invariants. Based on this observation, we propose the following conjecture.
\begin{conjecture}
    Let $\Lambda$ be a non-trivial Legendrian knot which is the Legendrian rainbow closure of a positive braid. Then $(S^{3}_{1/k}(\Lambda), \xi_{1/k}(\Lambda))$ is algebraically overtwisted and tight for $k\in \N_+$.
\end{conjecture}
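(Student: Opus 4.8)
The plan is to establish both assertions by the dichotomy already used for Theorems~\ref{thm:main} and \ref{thm:2}, exploiting the single structural feature shared by all the knots in the conjecture: each is a rainbow closure of a positive braid and, by \cite{T}, realizes the maximal Thurston--Bennequin invariant of its knot type. The first step is to reduce the general $1/k$ surgery to the basic case: contact $1/k$ surgery along $\Lambda$ can be presented as $k$ successive contact $(+1)$-surgeries on Legendrian pushoffs of $\Lambda$, which is precisely the form in which Avdek's algorithm \cite{avdek2020combinatorial} operates, so it suffices to control the $+1$ case and propagate the conclusions through the pushoff surgeries.

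For algebraic overtwistedness the goal is to show that the contact homology of $(S^{3}_{1/k}(\Lambda), \xi_{1/k}(\Lambda))$ vanishes. Following \cite{avdek2020combinatorial}, I would write down the Chekanov--Eliashberg DGA of the rainbow-closure front of the positive braid, whose Reeb chords are organized by the braid crossings together with the rainbow cusps, and feed it into Avdek's surgery formula. The key structural input is that a rainbow closure of a positive braid admits a graded normal ruling, equivalently an augmentation of its DGA (indeed these Legendrians bound Lagrangian fillings). The mechanism forcing the vanishing is the appearance, after surgery, of a Reeb orbit $e$ whose differential $\partial e$ contains the unit $1$; locating such $e$ is where positivity of the braid is used. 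The hypotheses $q_1\gg 0$ (respectively $q_1>1$) in Theorem~\ref{thm:2} are exactly what guarantee, in those families, that enough positive crossings survive to force this differential, so proving the conjecture requires producing the unit-killing differential for an \emph{arbitrary} nontrivial positive braid, i.e.\ removing the large-twisting hypothesis by a uniform combinatorial analysis of Avdek's word and disk count.

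For tightness I would show that the contact Ozsv\'ath--Szab\'o invariant $c(\xi_{1/k}(\Lambda))$ is nonzero, which obstructs overtwistedness and is the route taken in \cite{LS} for the trefoil. Concretely, I would track $c$ through the Heegaard Floer surgery exact triangle associated with the $+1$ surgeries on the pushoffs, starting from the generator $c(\xi_{std})\neq 0$. Equivalently, in the open-book picture a positive braid supplies a supporting open book whose monodromy is a product of positive Dehn twists, while each contact $(+1)$-surgery inserts a negative Dehn twist; the content of tightness is that the positive twisting coming from a nontrivial positive braid always dominates, keeping the monodromy right-veering and $c(\xi_{1/k}(\Lambda))$ nonzero. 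Making this domination quantitative and uniform over all positive braids and all $k\in\N_+$ is the crux.

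The main obstacle is the tension between the two conclusions: more positive twisting makes tightness easier but can obstruct the unit-killing differential needed for algebraic overtwistedness, whereas the surgery-generated negative twisting does the opposite. In the families of Theorems~\ref{thm:main} and \ref{thm:2} this balance is verified by an explicit DGA computation together with a direct analysis of the contact invariant, but both arguments lean on structural restrictions (the torus or twisted-torus shape, or $q_1$ large) that pin down the combinatorics. The real difficulty in the conjecture is to prove simultaneously, for every nontrivial positive braid, that Avdek's algorithm yields a boundary equal to $1$ and that $c(\xi_{1/k}(\Lambda))\neq 0$, without such restrictions --- presumably by isolating an intrinsic invariant of positive braids, of ruling-theoretic or fibered-monodromy type, that controls both computations at once.
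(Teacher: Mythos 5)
The statement you are addressing is stated in the paper as a \emph{conjecture}, and the paper does not prove it; it only establishes the special cases of \Cref{thm:main}, \Cref{thm:2}, and the $5_2$ examples. Your proposal is likewise not a proof: you correctly identify the two halves of the problem and the reduction of contact $1/k$ surgery to $k$ contact $(+1)$ surgeries on Legendrian pushoffs \cite{DG} followed by Liouville functoriality, but you explicitly leave the essential step --- a uniform argument for an arbitrary positive braid --- unresolved, so what you have is a genuine gap rather than a different route to a proof.

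Two more specific comments. First, the tightness half is not actually open: \Cref{lem:tight} shows that any nontrivial Legendrian rainbow closure of a positive braid satisfies $tb(\Lambda)=2g_s(\Lambda)-1$ with $g_s(\Lambda)>0$, and \cite{LS} then gives tightness of $(S^3_{1/k}(\Lambda),\xi_{1/k}(\Lambda))$ for all $k\in\N_+$. Your proposed re-derivation via the Heegaard Floer contact invariant and right-veering monodromy is unnecessary, and the ``tension'' you describe between positive twisting (helping tightness) and the unit-killing differential (helping vanishing of contact homology) is therefore not where the difficulty lies. Second, your description of the mechanism for algebraic overtwistedness does not match the machinery that would actually have to be extended. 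The existence of an orbit $e$ with constant term $\pm1$ in $\partial_{CH}e$ is automatic from \Cref{thm:curve} (an embedded RSFT disk covering a bounded region of the Lagrangian projection always exists); the hard part is showing that $\partial_{CH}e$ has \emph{no other} terms. In the paper this is done by (a) \Cref{prop:positive}, which already holds for every rainbow closure of a positive braid and restricts the relevant part of the algebra to orbits of single chords, and (b) showing that the system of linear inequalities coming from the intersection grading $\cI_{\Lambda}$ and positivity of intersection (\Cref{prop:no_curve}, as in \Cref{claim}) admits only the trivial solution. Augmentations and normal rulings, which you invoke, play no role in this argument. What is missing for the general conjecture is precisely the combinatorial analysis of the intersection grading for an arbitrary positive braid word, and your proposal does not supply it.
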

The tightness follows from \cite{LS}, see \Cref{lem:tight}. To show the vanishing of contact homology after a contact $1/k$ surgery for $k\in \N_+$, Avdek's algorithm reduces the problem into showing a set of linear inequalities can only have the trivial solution. While this approach often leads to an algebraically overtwisted contact manifold, applying this algorithm to general knots can be challenging. Additionally,  we show by explicit computations that contact $1/k$ surgeries along Chekanov's  Legendrian $5_2$ knots also result in algebraically overtwisted and tight 3-manifolds.

\subsection*{Acknowledgments} We thank Russell Avdek for helpful comments on a preliminary version of the paper and the anonymous referee for many helpful suggestions. Y.L. is supported by the National Natural Science Foundation of China under Grant No.\ 12271349.
Z.Z. is supported by the National Key R\&D Program of China under Grant No.\ 2023YFA1010500, the National Natural Science Foundation of China under Grant No.\ 12288201 and 12231010.
\section{Avdek's algorithm}
In \cite{avdek2020combinatorial}, Avdek gave descriptions of homology classes, Conley-Zehnder indices, and, notably, the intersection grading of Reeb orbits derived from the chord-to-orbit correspondence in contact $\pm1$ surgeries. The proof of \cite[Theorem 1.2]{avdek2020combinatorial} used all of this information. A key simplification used in \cite[Theorem 1.2]{avdek2020combinatorial}
is that the Conley-Zehnder index in the chord-to-orbit correspondence is bounded below by the word length, which enables us to focus solely on orbits  from single chords. This property still holds for rainbow closures of positive braids, and \Cref{thm:main,thm:2} boil down to showing a system of inequalities does not have non-trivial solutions from the intersection grading only, while the additional consideration of homology in \cite[\S 12.5.3]{avdek2020combinatorial} is not necessary. In this section, we revisit Avdek's chord-to-orbit correspondence and highlight its pertinent properties for our discussion.

\subsection{The Chord-to-orbit correspondence}

Let $(\R^3_{\Lambda^+},\xi_{\Lambda^+})$ ($(S^3_{\Lambda^+},\xi_{\Lambda^+})$, resp.) be the  contact 3-manifold obtained by applying contact $+1$ surgery on $(\R^3, \xi_{std})$ ($(S^3, \xi_{std})$, resp.) along a Legendrian knot  $\Lambda$.   
\begin{theorem}[{\cite[Theorem 5.1 (1)]{avdek2020combinatorial}}]
    There exists a contact form $\alpha_{\epsilon}$ on $(\R^3_{\Lambda^+},\xi_{\Lambda^+})$, such that there exists a one-to-one correspondence\footnote{Strictly speaking, this correspondence works for orbits with an action upper bound, which will go to infinity if $\epsilon\to 0$. In practice, e.g.\ the results in this paper, it suffices to work with a fixed $\epsilon\ll1$.} between closed orbits of $R_{\epsilon}$ (Reeb vector field for $\alpha_{\epsilon}$) with cyclic words of the Reeb chords of $\Lambda\subset (\R^3,\xi_{std})$. Here the Reeb chords are computed using the standard contact form $\rd z - y\rd x$.
\end{theorem}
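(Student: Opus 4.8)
The final statement is Avdek's Theorem 5.1(1): existence of a contact form whose Reeb orbits correspond bijectively to cyclic words in Reeb chords after +1 surgery.

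Let me think about how to prove this.The plan is to construct the contact form $\alpha_\epsilon$ explicitly by a local modification supported near the surgery region, and then to trace the dynamics of its Reeb field. First I would recall the standard model for contact $+1$ surgery along a Legendrian knot $\Lambda\subset(\R^3,\xi_{std})$: one removes a standard contact neighborhood of $\Lambda$ (a solid torus with a prescribed characteristic foliation on the boundary determined by the contact framing) and reglues it by the diffeomorphism realizing the $+1$ framing relative to the contact framing. The contact form $\rd z - y\,\rd x$ away from $\Lambda$ should be retained verbatim, so that the Reeb field $R_\epsilon=\partial_z$ and its chord dynamics are unchanged in the bulk; all the new closed-orbit behavior is forced into the glued-in handle region, which I would model as a neighborhood parametrized by the $1$-jet space $J^1(\Lambda)=T^*S^1\times\R$ of the knot.

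The key step is to engineer the contact form inside the surgery handle so that its Reeb trajectories, upon re-entering the bulk, are ``steered'' back to the Lagrangian projection of $\Lambda$ and thereby forced to shadow the Reeb chords of $\Lambda$. I would set up the handle contact form depending on the small parameter $\epsilon$ so that an orbit entering the handle near the endpoint of one chord exits near the starting point of another chord (or the same chord), and so that the only way to close up a Reeb trajectory is to concatenate a finite cyclic sequence of chord-shadowing arcs joined by short handle passages. This realizes the correspondence: a closed Reeb orbit $\leftrightarrow$ a cyclic word $c_{i_1}c_{i_2}\cdots c_{i_m}$ in the chords. Injectivity and surjectivity of the correspondence then follow from the rigidity of the model — each cyclic word determines a unique homotopy class of concatenation, and for $\epsilon\ll1$ the handle geometry admits exactly one closed orbit in that class. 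The footnote's caveat is handled here: only orbits below a fixed action threshold $A(\epsilon)$ are captured, where $A(\epsilon)\to\infty$ as $\epsilon\to0$, since longer words require proportionally more handle passages and hence larger action.

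I expect the main obstacle to be the transversality and nondegeneracy analysis inside the handle: one must verify that the interpolating contact form can be chosen so that \emph{every} closed Reeb orbit is nondegenerate and accounted for, with no spurious orbits created by the gluing (for instance, short orbits trapped entirely inside the handle, or orbits that wrap the handle in unintended ways). Controlling this requires a careful choice of the Reeb vector field's return map on a cross-section of the handle and an argument — essentially a graphical/contraction estimate as $\epsilon\to0$ — that its periodic points are in bijection with cyclic chord words. Since this is precisely the content of the cited \cite[Theorem 5.1 (1)]{avdek2020combinatorial}, I would invoke Avdek's construction for these handle estimates rather than reprove them, and concentrate the exposition on identifying the surgery model and verifying that the chord-to-orbit bookkeeping is compatible with the contact form $\rd z - y\,\rd x$ used to count the chords of $\Lambda$.
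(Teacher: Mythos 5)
This statement is imported verbatim from Avdek's work (\cite[Theorem 5.1 (1)]{avdek2020combinatorial}); the paper offers no proof of its own, only the citation. Your sketch is a reasonable outline of Avdek's actual construction (standard form kept in the bulk, Reeb dynamics engineered in the glued-in handle so that closed orbits below an $\epsilon$-dependent action threshold are concatenations of chord-shadowing arcs), and since you explicitly defer the return-map, nondegeneracy, and no-spurious-orbit analysis to the cited reference, your approach is essentially the same as the paper's.
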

For chords $c_i$, we use $(c_1\ldots c_n)$ to denote the Reeb orbit corresponding to the cyclic word $c_1\ldots c_n$. On the other hand, given a Reeb orbit of $R_{\epsilon}$, we use $\cw(\gamma)$ to denote the corresponding cyclic word of Reeb chords. By $\wl(\gamma)$, we mean the word length of $\cw(\gamma)$. For a Reeb orbit $\gamma$ of $R_{\epsilon}$ and Reeb chords $c_i$ of $\Lambda$, we define contact actions
$$\cA(\gamma):=\int \gamma^*\alpha_{\epsilon}, \quad \cA(c_1\ldots c_n):=\sum_{i=1}^n \int c_i^*(\rd z - y\rd x).$$
\begin{proposition}[{\cite[Proposition 5.13]{avdek2020combinatorial}}]\label{prop:period}
For any Reeb orbit $\gamma$ of $R_{\epsilon}$ we have 
$$|\cA(\gamma)-\cA(\cw(\gamma))|<3\epsilon \wl(\gamma).$$
\end{proposition}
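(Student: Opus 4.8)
The plan is to exploit the explicit structure of the contact form $\alpha_\epsilon$ produced by the surgery construction, in which the parameter $\epsilon$ controls the ``size'' of the region where $\alpha_\epsilon$ deviates from the model built out of the standard contact form $\rd z - y\rd x$. First I would recall from the construction of $\alpha_\epsilon$ in \cite[Theorem 5.1]{avdek2020combinatorial} that $(\R^3_{\Lambda^+},\xi_{\Lambda^+})$ is assembled by gluing in a surgery region modeled on a fixed local picture scaled by $\epsilon$, and that away from an $O(\epsilon)$-neighborhood of this region the Reeb flow of $R_\epsilon$ coincides with that of the standard form, whose trajectories between the two sheets near $\Lambda$ are precisely the Reeb chords $c_i$.

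Next, given a Reeb orbit $\gamma$ with $\cw(\gamma)=c_1\ldots c_n$ (so $\wl(\gamma)=n$), I would use the chord-to-orbit correspondence to decompose $\gamma$ into $n$ consecutive arcs $\gamma=\gamma_1\cup\cdots\cup\gamma_n$, each arc $\gamma_i$ consisting of a ``chord portion'' that shadows the Reeb chord $c_i$ in the bulk, followed by a ``transition portion'' that passes through the surgery region and delivers the trajectory to the starting point of $c_{i+1}$ (indices taken cyclically). The total action then splits additively as $\cA(\gamma)=\int\gamma^*\alpha_\epsilon=\sum_{i=1}^n\int_{\gamma_i}\alpha_\epsilon$.

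The heart of the argument is a per-arc estimate
$$\Big|\int_{\gamma_i}\alpha_\epsilon-\int c_i^*(\rd z-y\rd x)\Big|<3\epsilon.$$
On the chord portion, $\alpha_\epsilon$ agrees with the standard form except on pieces supported within distance $O(\epsilon)$ of the endpoints of $c_i$, so its integral differs from $\int c_i^*(\rd z-y\rd x)$ by $O(\epsilon)$; on the transition portion, the trajectory remains inside the surgery region, where both its diameter and the norm of $\alpha_\epsilon$ are $O(\epsilon)$, so this contribution is likewise $O(\epsilon)$. Tracking the explicit coordinate bounds in the model pins down the constant, giving an error strictly below $3\epsilon$ for each arc. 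Summing over the $n$ arcs yields $|\cA(\gamma)-\cA(\cw(\gamma))|<3\epsilon n=3\epsilon\,\wl(\gamma)$.

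The step I expect to be the main obstacle is the sharp per-arc bound. One must write $\alpha_\epsilon$ explicitly in the surgery coordinates, verify that the modified Reeb flow accumulates no more than $O(\epsilon)$ of action while transiting the handle, and simultaneously control the deviation of the chord portion from the exact chord near the two endpoints where the form has been interpolated; obtaining the precise constant $3$ rather than an unspecified $O(\epsilon)$ requires care here. Once this local estimate is established, additivity over the $n$ chords makes the linear dependence on $\wl(\gamma)$ immediate.
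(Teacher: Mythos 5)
The paper does not prove this proposition at all: it is imported verbatim as \cite[Proposition 5.13]{avdek2020combinatorial}, so there is no in-paper argument to compare against. Measured against the source, your outline follows the same strategy Avdek's construction supports --- decompose the closed orbit of $R_\epsilon$ into $n$ arcs, each consisting of a portion shadowing the chord $c_i$ outside the surgery neighborhood $N_\epsilon(\Lambda)$ together with a transition through the handle region, bound the action discrepancy arc by arc, and sum.

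As a standalone proof, however, your proposal has a genuine gap, and you have named it yourself: the per-arc estimate $\bigl|\int_{\gamma_i}\alpha_\epsilon-\int c_i^*(\rd z-y\rd x)\bigr|<3\epsilon$ is asserted, not established. That inequality is the entire content of the proposition; everything else (additivity of $\int\gamma^*\alpha_\epsilon$ over the decomposition and multiplying by $n=\wl(\gamma)$) is immediate. Closing the gap requires the explicit coordinates of the surgery model: the chord $c_i$ is truncated where it meets the $\epsilon$-neighborhood of $\Lambda$ at each of its two endpoints, each truncation costing an action error controlled by $\epsilon$, and the action accumulated while transiting the handle must be bounded separately; only after these contributions are computed in coordinates does the constant $3$ (rather than an unspecified $O(\epsilon)$) emerge. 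One further imprecision: the handle contribution is not controlled by ``the norm of $\alpha_\epsilon$'' being small there, since $\alpha_\epsilon(R_\epsilon)=1$ means the action picked up in the handle is exactly the time of flight of the Reeb trajectory through that region; it is this time, not the size of the form, that the model bounds by a multiple of $\epsilon$.
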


If we orient the Legendrian knot $\Lambda$ and its meridian $\mu$ as in \Cref{fig:meridian}, then $\mu$ is a generator of $H_1(\R^3_{\Lambda^+};\Z)$ and is subject to the relation $(tb(\Lambda)+1)\mu=0$.

\begin{figure}[htb] {\small
\begin{overpic}%[grid,tics=10]
{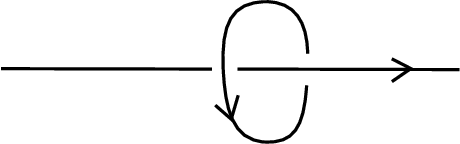}
\put (100,10) {$\mu$}
\put (200,40) {$\Lambda$}
\end{overpic}}
\caption{Default orientations of meridians  }
\label{fig:meridian}
\end{figure}

Let $c_1\ldots c_n$ be a cyclic word of Reeb chords. We define the push-out loop $P(c_1\ldots c_n)$ in the complement $\R^3\backslash N_{\epsilon}(\Lambda)$, which is homotopic to the orbit $(c_1\ldots c_n)$. Here $N_{\epsilon}(\Lambda)$ is a tubular neighborhood of $\Lambda$, where the surgery applies. The push-out loop $P(c_1\ldots c_n)$ in the Lagrangian projection can be described as follows: Starting at the end point of the chord $c_1$, and following the direction of the Legendrian knot, we push the Legendrian arc to the left until we reach the starting point of $c_2$. From there, we trace the Reeb chord to the end point of $c_2$ and repeat the procedure until it is closed, see \Cref{fig:push-outs }. In the terminology of \cite[\S 9.4]{avdek2020combinatorial}, our push-outs are the push-outs using positive capping paths $\eta_{j_k,j_{k+1}}$ for the $+1$ surgery (ref. \cite[Figure 19]{avdek2020combinatorial}).

\begin{figure}[htb] {\small
\begin{overpic}%[grid,tics=10]
{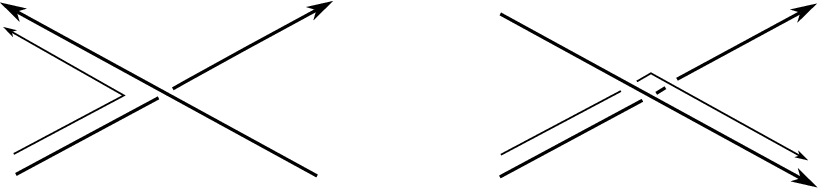}

\end{overpic}}
\caption{Push-outs of the Reeb orbits.  }
\label{fig:push-outs }
\end{figure}

\subsection{Intersection grading and holomorphic curves}
The Lagrangian projection of a Legendrian knot divides $\R^2$ into several bounded regions and an unbounded region. We consider an $\epsilon$ neighborhood $N_{\epsilon}$ of $\Lambda$, such that the complement of the Lagrangian projection of $N_{\epsilon}$ has the same number of components as the complement of the Lagrangian projection of $\Lambda$. We order the components of the complement of the Lagrangian projection of $N_{\epsilon}$ as $R_0, R_1, \ldots, R_K$, with $R_0$ representing the unbounded region. In \cite[\S  11 and \S 12.4]{avdek2020combinatorial}, Avdek introduced an intersection index $\cI_{\Lambda}(\sum_{j=1}^m \gamma_j)$, where $\gamma_1,\ldots,\gamma_p$ are Reeb orbits contained in $\R^3_{\Lambda^+}\backslash (\cup_{i=0}^K \R_z\times R_i)$ and $\sum_{j=1}^m \gamma_j$ is null-homologous in $\R^3_{\Lambda^+}$, as follows. Let $(x_i,y_i)\in R_i$ for $i>0$. Then the intersection grading in $\Z\la R_i\ra$ of $\sum_{j=1}^m \gamma_j$ is given by 
$$\cI_{\Lambda}(\sum_{j=1}^m \gamma_j) = \sum_{i=1}^K ((x_i,y_i)\times \R_z\cdot D)R_i\in \Z\la R_1,\ldots,R_K\ra,$$
where $D$ is a singular chain in $\R^3_{\Lambda^+}$ such that $\partial D = \sum_{j=1}^m \gamma_j$, and $(x_i,y_i)\times \R_z \cdot D$ is the intersection number of $(x_i,y_i)\times \R_z$ and $D$ which is independent of $(x_i,y_i)$ and the relative homology class of $D$. To get rid of the dependence of $D$, we assume $tb(\Lambda)\ne -1$, which is the case for all knots considered in this paper. Then $H_2(\R^3_{\Lambda_+};\Z)$ is torsion and hence the intersection number is independent of $D$. The intersection grading of $\sum_{j=1}^m \gamma_j$ remains unchanged when we vary it within a cobordism class in $\R^3_{\Lambda^+}\backslash (\cup_{i=0}^K \R_z\times R_i)$. The intersection grading is clearly linear with respect to the union of loop collections with trivial homology classes, i.e.
 $$\cI_{\Lambda}(\sum_j \gamma_j+\sum_{j'} \delta_{j'})=\cI_{\Lambda}(\sum_j \gamma_j)+\cI_{\Lambda}(\sum_{j'} \delta_{j'}).$$
In the computation of contact homology, we will choose an almost complex structure such that $\R_s\times (x_i,y_i)\times \R_z \subset \R_{s}\times \R^3_{\Lambda^+}$ is holomorphic. As a consequence, we have the following constraint from positivity of intersection, which makes the intersection grading a useful tool for excluding certain holomorphic curves in the contact homology computation. 

Here we fix our notion for contact homology for both closed contact 3-manifolds and punctured contact 3-manifolds. For a closed Reeb orbit $\gamma$, we denote the associated formal variable as  $q_{\gamma}$. The differential in contact homology is denoted by $\partial_{CH}$. 
 
 \begin{proposition}[{\cite[Discussions before \S 12.5]{avdek2020combinatorial}}]\label{prop:no_curve}
     If there is a contact homology differential from $q_{\gamma_0}$ to $q_{\gamma_1}\ldots q_{\gamma_m}$, then 
     $$\cI_{\Lambda}(\gamma_0-\sum_{j=1}^m \gamma_j)\geq0,$$
     i.e.\ all coefficients of $\cI_{\Lambda}$ is non-negative. The intersection grading is defined, as $\gamma_0-\sum_{j=1}^m \gamma_j$ is null-homologous by the existence of contact homology differential.
 \end{proposition}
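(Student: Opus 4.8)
The plan is to translate the algebraic input---the presence of a nonzero monomial $q_{\gamma_1}\ldots q_{\gamma_m}$ in $\partial_{CH}q_{\gamma_0}$---into the geometry of holomorphic curves, and then read off the inequality from positivity of intersection with the vertical holomorphic cylinders that define $\cI_{\Lambda}$. Concretely, a nonzero coefficient in the contact homology differential means there is a (rigid) $J$-holomorphic curve $u$ in the symplectization $\R_s\times\R^3_{\Lambda^+}$ with one positive puncture asymptotic to $\gamma_0$ and negative punctures asymptotic to $\gamma_1,\ldots,\gamma_m$ (with multiplicity). By the stated choice of $J$, for each $i=1,\ldots,K$ the vertical cylinder $C_i:=\R_s\times(x_i,y_i)\times\R_z$ over the chosen point $(x_i,y_i)\in R_i$ is $J$-holomorphic.

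First I would verify that $u$ shares no component with any $C_i$, so that positivity of intersection applies. This is immediate from the setup: the asymptotic orbits $\gamma_0,\gamma_1,\ldots,\gamma_m$ all lie in $\R^3_{\Lambda^+}\setminus(\cup_{i=0}^K \R_z\times R_i)$, whereas $C_i$ is the trivial cylinder over the line $\ell_i:=(x_i,y_i)\times\R_z\subset \R_z\times R_i$, which is not a Reeb orbit. Hence near both ends of the symplectization the image of $u$ is disjoint from $C_i$, every intersection of $u$ with $C_i$ occurs in a compact region, and the algebraic intersection number $u\cdot C_i$ is well defined and finite. By positivity of intersection for distinct $J$-holomorphic curves in an almost complex $4$-manifold, each intersection point contributes positively, so $u\cdot C_i\geq 0$ for every $i$.

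The remaining step is to identify $u\cdot C_i$ with the $R_i$-coefficient of $\cI_{\Lambda}(\gamma_0-\sum_{j=1}^m\gamma_j)$. Let $\pi:\R_s\times\R^3_{\Lambda^+}\to\R^3_{\Lambda^+}$ be the projection; then $C_i=\pi^{-1}(\ell_i)$, so $u\cdot C_i$ equals the intersection number in $\R^3_{\Lambda^+}$ of the projected $2$-chain $D:=\pi_*[u]$ with the line $\ell_i$. Since $u$ is asymptotic to $\gamma_0$ at the positive end and to $\gamma_1,\ldots,\gamma_m$ at the negative ends, the chain $D$ satisfies $\partial D=\gamma_0-\sum_{j=1}^m\gamma_j$. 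Taking this $D$ in the definition of the intersection grading---legitimate because $\gamma_0-\sum_j\gamma_j$ is null-homologous and, under our assumption $tb(\Lambda)\neq-1$, the intersection number is independent of the chain---yields $((x_i,y_i)\times\R_z\cdot D)=u\cdot C_i$. Therefore the $R_i$-coefficient of $\cI_{\Lambda}(\gamma_0-\sum_j\gamma_j)$ equals $u\cdot C_i\geq 0$, as claimed.

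The main obstacle I expect is the non-compactness at the punctures: one must ensure that $u\cdot C_i$ is a genuine topological invariant and that it coincides with the projected $3$-dimensional intersection number. Both points rest on the asymptotic disjointness of $u$ from $C_i$, i.e.\ on the fact that the orbits avoid the regions $R_i$; once the intersections are confined to a compact set, local positivity of intersection and the naturality of $\pi$ finish the argument. A secondary technical point is to confirm that $\pi_*[u]$ is a bona fide relative $2$-chain with the asserted boundary, which follows from the exponential convergence of $u$ to its asymptotic Reeb orbits.
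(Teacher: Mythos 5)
Your proposal is correct and follows exactly the route the paper intends: the paper cites Avdek and justifies the proposition in one sentence by choosing $J$ so that the vertical cylinders $\R_s\times(x_i,y_i)\times\R_z$ are holomorphic and invoking positivity of intersection, which is precisely the argument you spell out (including the correct identification of $u\cdot C_i$ with the $R_i$-coefficient of $\cI_{\Lambda}$ and the compactness of the intersection locus). No discrepancy to report.
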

 
 Let $\mu$ and $\lambda$ be the meridian and longitude of $\Lambda$ following the orientation convention in \Cref{fig:meridian}, where $\lambda$ is induced by a Seifert surface. Then
\begin{equation}\label{eqn:wind}
    \cI_{\Lambda}(\lambda)=\sum_i wind(\Lambda,R_i)R_i,
\end{equation}
where $wind(\Lambda,R_i)$ is the winding number of $\Lambda$ around the bounded region $R_i$. Since $\lambda+(tb(\Lambda)+1)\mu$ is null-homologous in $\R^3_{\Lambda^+}\backslash (\cup_{i=0}^K \R_z\times R_i)$, we have 
\begin{equation}\label{eqn:mu}
    \cI_{\Lambda}((tb(\Lambda)+1)\mu)=-\cI_{\Lambda}(\lambda)=-\sum_i wind(\Lambda,R_i)R_i.
\end{equation}

\subsection{Rational intersection grading}
We assume $tb(\Lambda)\ne -1$. Then every Reeb orbit in $(\R^3_{\Lambda^+}, \xi_{\Lambda^+})$ represents a torsion homology class. So for any Reeb orbit $\gamma$ (which is contained in  $\R^3_{\Lambda^+}\backslash (\cup_{i=0}^K \R_z\times R_i)$), there exists a positive integer $d$ such that $d\gamma$ is null-homologous. We define
$$\cI_{\Lambda}(\gamma):=\frac{1}{d}\cI_{\Lambda}(d\gamma)\in \Q\la R_i\ra.$$
It is straightforward to verify that $\cI_{\Lambda}(\gamma)$ is independent of  $d$. This rational intersection grading is linear, i.e., 
$$\cI_{\Lambda}(\gamma+\delta) = \cI_{\Lambda}(\gamma)+\cI_{\Lambda}(\delta), \quad \cI_{\Lambda}(-\gamma)=-\cI_{\Lambda}(\gamma).$$
The meridian $\mu$ is also contained in  $\R^3_{\Lambda^+}\backslash (\cup_{i=0}^K \R_z\times R_i)$, and from \eqref{eqn:mu}, we have 
$$\cI_{\Lambda}(\mu)=\frac{-1}{tb(\Lambda)+1}\sum_i wind(\Lambda,R_i)R_i.$$
Let $c$ be a Reeb chord, we define 
$$\cI_{\Lambda}(c):=\cI_{\Lambda}((c))=\cI_{\Lambda}(P(c)).$$

\begin{proposition}
    $\cI_{\Lambda}(c) = \sum_i wind(P(c),R_i) R_i+lk(P(c),\Lambda)\cI_{\Lambda}(\mu).$
\end{proposition}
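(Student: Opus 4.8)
The plan is to reduce the computation to two pieces via the homological decomposition of the push-out loop in the knot complement, and then to evaluate each piece using the intersection-number description of $\cI_\Lambda$. First I would work in the complement $C := \R^3 \setminus N_\epsilon(\Lambda)$, where $H_1(C;\Z) \cong \Z$ is generated by the meridian $\mu$ and the class of any loop is detected by its linking number with $\Lambda$. Hence $[P(c)] = lk(P(c),\Lambda)[\mu]$ in $H_1(C;\Z)$, so the geometric $1$-cycle $\gamma_0 := P(c) - lk(P(c),\Lambda)\mu$ is null-homologous in $C$, and therefore also in $\R^3_{\Lambda^+}$ (since $C$ embeds in the surgered manifold away from the surgery locus). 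Invoking the linearity of the rational intersection grading,
\begin{equation*}
\cI_\Lambda(P(c)) = \cI_\Lambda(\gamma_0) + lk(P(c),\Lambda)\,\cI_\Lambda(\mu),
\end{equation*}
which already produces the second term of the asserted formula. It then remains to show $\cI_\Lambda(\gamma_0) = \sum_i wind(P(c),R_i)R_i$.

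For this first piece I would use the same mechanism that yields \eqref{eqn:wind}. Since $\gamma_0$ is null-homologous in $C$, it bounds a $2$-chain $S \subset C \subset \R^3$; as the vertical lines $(x_i,y_i)\times\R_z$ over the chosen points of the regions $R_i$ also lie in $C$ and avoid the surgery locus, the coefficient of $R_i$ in $\cI_\Lambda(\gamma_0)$ is the intersection number $((x_i,y_i)\times\R_z)\cdot S$, computed entirely inside $\R^3$. By the standard degree argument—projecting $S$ to the plane, the signed count of sheets of $\pi|_S$ over a regular value $(x_i,y_i)$ equals the winding number of $\pi(\partial S)$ about that point—this intersection number equals $wind(\pi(\gamma_0),R_i)$. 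By additivity of the winding number over the cycle $\gamma_0 = P(c) - lk(P(c),\Lambda)\mu$, this is $wind(P(c),R_i) - lk(P(c),\Lambda)\,wind(\mu,R_i)$, recovering precisely the special case $\gamma_0 = \lambda$ (with $lk(\lambda,\Lambda)=0$) already recorded in \eqref{eqn:wind}.

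The remaining point, which I expect to be the main (if modest) obstacle, is to verify that $wind(\mu,R_i) = 0$ for every bounded region $R_i$, so that the meridian contributes nothing to the first piece and its entire effect is carried by the invariant $\cI_\Lambda(\mu)$ in the second term. I would establish this by choosing the standard meridian to bound a small meridian disk contained in a ball $B$ about a point of $\Lambda$ with $\pi(B)\subset\pi(N_\epsilon(\Lambda))$; then $\pi(\mu)$ is a small loop disjoint from all the $R_i$ and not enclosing any chosen point $(x_i,y_i)$, so its planar winding numbers vanish. Since $\cI_\Lambda$ is a cobordism invariant, this particular representative suffices. The one subtlety to keep in mind is that the winding-number side genuinely depends on the chosen geometric representatives of $P(c)$ and $\mu$ (a homotopy in $C$ may push the projection across a region $R_i$), which is exactly why the formula is stated in terms of the specific push-out loop and the standard meridian; using these representatives throughout, combining the three steps gives the claimed identity.
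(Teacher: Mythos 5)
Your proof is correct and follows essentially the same strategy as the paper: both arguments split off $lk(P(c),\Lambda)$ copies of the meridian and identify the grading of the remaining linking-zero cycle with the winding numbers of $P(c)$. The only difference is in how that remainder is handled — the paper pushes $P(c)$ vertically above $\Lambda$ to get a representative $P'(c)$ whose grading is visibly $\sum_i wind(P(c),R_i)R_i$, whereas you work with $P(c)-lk(P(c),\Lambda)\mu$ directly, bounding it in the knot complement and using a degree argument together with the observation that a small meridian has vanishing winding numbers; both routes are valid.
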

\begin{proof}
We move $P(c)$ vertically upwards in the $z$-direction to $P'(c)$ so that $P'(c)$ stays above $\Lambda$.  We see that $P(c)-P'(c)$ is homologous to $lk(P(c),\Lambda)\mu$ in  $\R^3_{\Lambda^+}\backslash (\cup_{i=0}^K \R_z\times R_i)$. Then the claim follows from  $\cI_{\Lambda}(P'(c))=\sum_i wind(P(c),R_i) R_i$.
\end{proof}
In general, let $c_1\ldots c_n$ be a cyclic word of Reeb chords, we write 
$$\cI_{\Lambda}(c_1\ldots c_n):=\cI_{\Lambda}((c_1\ldots c_n))=\cI_{\Lambda}(P(c_1\ldots c_n)).$$
From \Cref{ex:non-add}, we see that $\cI_{\Lambda}$ is not additive with respect to the concatenation of words, i.e.\ $\cI_{\Lambda}(w_1w_2)\ne \cI_{\Lambda}(w_1)+\cI_{\Lambda}(w_2)$ in general. However, for rainbow closures of positive braids, the following proposition allows us to consider only the intersection gradings of type $\cI_{\Lambda}(c)$, where $c$ is a single chord. 

\begin{proposition}\label{prop:positive}
Let $\Lambda$ be the Legendrian rainbow closure of a positive braid, see the left of Figure~\ref{fig:rot} for its Lagrangian projection. Then the Conley-Zehnder index of $(c_1\ldots c_n)$ is bounded below by $n$. In particular, the SFT degree of an orbit $\gamma$ is at least $\wl(\gamma)-1$.
\end{proposition}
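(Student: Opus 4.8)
The plan is to compute the Conley--Zehnder index of the orbit $(c_1\ldots c_n)$ directly from the chord-to-orbit correspondence and show it grows at least linearly in the number of chords, with slope at least $1$. The starting observation is that for a Legendrian rainbow closure of a positive braid, every Reeb chord in the Lagrangian projection contributes positively to the relevant index count: rainbow closures of positive braids are maximal Thurston--Bennequin representatives whose chords all have the same sign in the front diagram, so there is no cancellation among the individual chord contributions. Concretely, I would first recall Avdek's formula expressing the Conley--Zehnder index of $(c_1\ldots c_n)$ in terms of a sum of local contributions $\sum_{k=1}^n \mu(c_k)$ of the individual chords, plus a correction coming from the rotation/capping paths $\eta_{j_k,j_{k+1}}$ used to close up the word into a loop (this is the data recorded in \cite[\S 9.4]{avdek2020combinatorial}).

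The key step is then to show each chord $c_k$ contributes at least $1$ to the index after accounting for the capping correction, i.e.\ that the per-chord contribution is bounded below by $1$. For a positive braid the crossings are all positive, and the Reeb chords of its rainbow closure are exactly the chords associated to these crossings; in the front picture each such chord corresponds to a cusp-free crossing whose Conley--Zehnder contribution is controlled by the positivity of the braid word. I would verify that the positive capping paths contribute non-negatively to the degree so that summing over $k$ gives $\mathrm{CZ}(c_1\ldots c_n)\ge \sum_{k=1}^n 1 = n$. The cleanest way to organize this is to analyze a single generator $\sigma_i$ first, establish its contribution is $\ge 1$, and then use additivity of the local index contributions along the word (the capping-path correction being non-negative because we use the positive capping paths for the $+1$ surgery).

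From the bound $\mathrm{CZ}(\gamma)\ge \wl(\gamma)$ the statement about the SFT degree is immediate: the SFT (contact homology) degree of an orbit $\gamma$ equals $\mathrm{CZ}(\gamma)-1$ in the dimension-$3$ convention used here, so $\mathrm{CZ}(\gamma)\ge \wl(\gamma)$ gives SFT degree $\ge \wl(\gamma)-1$. I expect the main obstacle to be the careful bookkeeping of the capping-path contributions: one must confirm that the rotation numbers introduced when closing a cyclic word into an honest loop never decrease the index below the naive per-chord count, and that the $\tb(\Lambda)\ne -1$ hypothesis (used throughout to make the gradings well defined) interacts correctly with the index formula. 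The positivity of the braid is precisely what rules out negative contributions, so the crux is translating ``positive braid'' into the sign condition on each local index term and verifying it survives the closing-up operation.
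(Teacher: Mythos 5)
Your overall strategy---write the Conley--Zehnder index of $(c_1\ldots c_n)$ as a sum of $n$ local contributions and show each is at least $1$---is the same shape as the paper's argument, but the step that carries all the content is asserted rather than proved, and the reason you give for it is not correct as stated. The paper uses Avdek's formula $\mathrm{CZ}((c_1\ldots c_n))=\sum_{k=1}^{n}(rot_{k,k+1}+1)$, where $rot_{k,k+1}=\lfloor\theta_{k,k+1}/\pi\rfloor$ is the rotation number of the capping path joining consecutive chords; the contributions are indexed by \emph{pairs} of consecutive chords via their capping paths, not by a sum $\sum_k\mu(c_k)$ of individual chord indices plus a correction. More importantly, your justification that ``the positive capping paths contribute non-negatively'' is exactly the claim that needs proof, and it is false arc-by-arc: in the decomposition of a capping path into the three arc types of Figure~\ref{fig:rot}, the Type~2 arcs each contribute $-\tfrac{3\pi}{2}$ to the rotation angle. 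Non-negativity of $rot_{k,k+1}$ only emerges from the combinatorial fact that, for a rainbow closure of a positive braid, every capping path consists of either one Type~1 arc together with equal numbers of Type~2 and Type~3 arcs (total angle $\tfrac{\pi}{2}$), or of $m+1$ Type~3 arcs and $m$ Type~2 arcs (total angle $\tfrac{3\pi}{2}$), so that the negative contributions always cancel. Your appeal to ``all crossings have the same sign, so there is no cancellation'' is a heuristic that does not substitute for this count; positivity of the braid enters precisely through this pairing of descending and ascending arcs, and without it the bound $rot_{k,k+1}\ge 0$ is unproved. (The remark about $tb(\Lambda)\ne-1$ is a red herring here: that hypothesis is only needed for the intersection grading, not for the index formula.) The final deduction of the SFT degree bound from $\mathrm{CZ}\ge n$ is fine.
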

\begin{proof}
Assuming each crossing of the Lagrangian projection of $\Lambda$ is in good position (ref.\ \cite[\S 4.2]{avdek2020combinatorial}), we will show that the rotation angle $\theta_{i,j}$ (ref.\ \cite[\S 3.3.1]{avdek2020combinatorial}) of any pair $(c_i, c_j)$ is either $\frac{\pi}{2}$ or $\frac{3\pi}{2}$. The capping path (ref.\ \cite[\S 3.3]{avdek2020combinatorial}) of the pair $(c_i, c_j)$ is denoted by $\eta_{i,j}$. Since $\Lambda$ is a positive braid closure with Lagrangian projection from the braid representation, the capping path $\eta_{i,j}$ consists of three types of arcs in Figure~\ref{fig:rot}. The Type 1 arc contributes $\frac{\pi}{2}$ to the rotation angle. The Type 2 arc contributes $-\frac{3\pi}{2}$ to the rotation angle. The Type 3 arc contributes $\frac{3\pi}{2}$ to the rotation angle. If $c_i$ belongs to the braid area, then $\eta_{i,j}$ consists of one Type 1 arc, $m$ Type 2 arcs, and $m$ Type 3 arcs, for some integer $m$. So the rotation angle $\theta_{i,j}$ of  $(c_i, c_j)$ is $\frac{\pi}{2}$.  If $c_i$ does not belong to the braid area, then $\eta_{i,j}$ consists of $m+1$ Type 3 arcs and $m$ Type 2 arcs, for some integer $m$. So the rotation angle $\theta_{i,j}$ of $(c_i, c_j)$ is $\frac{3\pi}{2}$. 

Therefore the rotation number $rot_{i, j}=\lfloor\frac{\theta_{i,j}}{\pi}\rfloor$ (ref.\ \cite[\S 3.3.1]{avdek2020combinatorial})  of the pair $(c_i, c_j)$ is either $0$ or $1$. According to \cite[Theorem 7.1]{avdek2020combinatorial}, the Conley-Zehnder index of $(c_1\ldots c_n)$ is $\sum_{k=1}^{n}(rot_{k, k+1}+1)\geq n$.
\end{proof}

\begin{figure}[htb] {\small
\begin{overpic}%[grid,tics=10]
{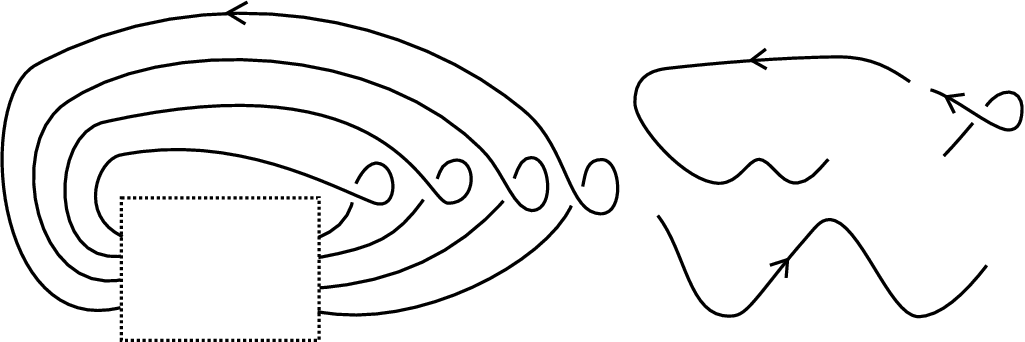}
\put (380,10) {Type $1$}
\put (460,80) {Type $2$}
\put (330,120) {Type $3$}
\put (70,30) { A positive braid}
\end{overpic}}
\caption{The left part is the Legendrian rainbow closure of a positive braid. The right part contains three types of arcs in a capping path.}
\label{fig:rot}
\end{figure}

\subsection{Source of holomorphic planes}
Let $u$ be an embedded RSFT disk $u:\D \backslash\{p_k\}\to \R^2$ for some positive boundary punctures $\{p_k\}$ that completely covers a component of $\R^2\backslash \pi_{x,y}(\Lambda)$ as in \Cref{fig:disk}. Let $c_1,\ldots,c_n$ be the cyclic chords of the disk. These disks give rise to rigid (modulo translation) holomorphic disks in the symplectization of $\R^3$ with boundary condition on the $\R\times \Lambda$ and positive asymptotic conditions given by $\{c_i\}$ by \cite[Corollary 11.2]{avdek2020combinatorial}. Such disks contribute to Ng's Rational symplectic field theory for Legendrian knots \cite{zbMATH05822877}.

\begin{figure}[htb] {\small
\begin{overpic}%[grid,tics=10]
{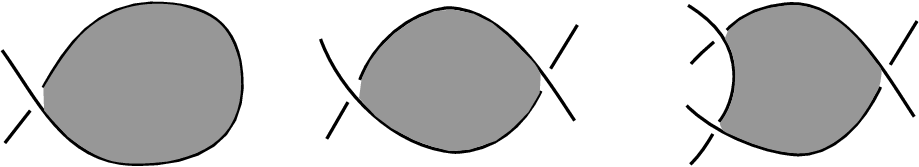}

\end{overpic}}
\caption{RSFT disks with only positive punctures.}
\label{fig:disk}
\end{figure}

\begin{theorem}[{\cite[Theorem 12.2]{avdek2020combinatorial}}]\label{thm:curve}
    Given an embedded RSFT disk as above, the constant term of $\partial_{CH}(q_{(c_1\ldots c_n)})$ is $\pm 1$.
\end{theorem}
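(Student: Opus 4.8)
The plan is to identify the constant term of $\partial_{CH}(q_{(c_1\ldots c_n)})$ with a signed count of rigid holomorphic planes, and then to show that the embedded RSFT disk $u$ supplies exactly one such plane. First I would recall that the constant term is, by definition, the coefficient of the empty word in $\partial_{CH}(q_{(c_1\ldots c_n)})$, i.e.\ the signed count of rigid (modulo translation) holomorphic planes in the symplectization $\R_s\times \R^3_{\Lambda^+}$ with a single positive puncture asymptotic to the Reeb orbit $(c_1\ldots c_n)$ and no negative punctures. The target is therefore to prove that this moduli space, which is $0$-dimensional since the plane is rigid, consists of a single transversely cut out point; any single transverse point is automatically counted with sign $\pm1$.

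Next I would pass from the surgered manifold back to the Legendrian $\Lambda$ via the surgery correspondence. By the chord-to-orbit correspondence of \cite[Theorem 5.1]{avdek2020combinatorial} together with Avdek's neck-stretching analysis, a holomorphic plane in $\R_s\times\R^3_{\Lambda^+}$ asymptotic to $(c_1\ldots c_n)$ breaks, in the limit $\epsilon\to 0$, into a holomorphic building in the symplectization $\R_s\times\R^3$ with boundary on $\R_s\times\Lambda$ and positive asymptotics at the Reeb chords $c_1,\ldots,c_n$ in cyclic order. The dominant rigid level of this building is precisely an RSFT disk of the type in Figure~\ref{fig:disk}. By \cite[Corollary 11.2]{avdek2020combinatorial}, the embedded disk $u$ that covers a single component $R_i$ of $\R^2\setminus\pi_{x,y}(\Lambda)$ gives rise to such a rigid holomorphic disk with exactly the asymptotics $c_1,\ldots,c_n$, so at least one plane exists and the constant term is nonzero.

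The crucial step is rigidity and uniqueness of this contribution. Since $u$ is embedded and covers $R_i$ exactly once, its boundary traces $\partial R_i$ and its positive punctures sit precisely at the crossings bounding $R_i$; the disk is thus completely determined by the combinatorial data of the region. Any other curve contributing to the same constant term carries the same positive asymptotics and the same (minimal) action by \Cref{prop:period}, and upon projecting must again cover the region $R_i$, so it coincides with $u$. Consequently the relevant moduli space is a single transversely cut out point, whence the count, and hence the constant term, equals $\pm1$.

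The main obstacle I anticipate is the analytic input hidden in the surgery correspondence: one must control the SFT compactness and gluing so that, for the fixed small $\epsilon$, the only holomorphic building contributing to the constant term is the one arising from the embedded disk $u$, and one must verify transversality of the resulting plane. Certifying that the count is the unit $\pm1$ rather than $0$ requires the coherent orientation bookkeeping of \cite{avdek2020combinatorial} to rule out cancellation, which is exactly where the embeddedness of $u$ and the single-region covering are essential.
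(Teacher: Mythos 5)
First, a point of comparison: the paper does not prove this statement at all. It is imported verbatim from Avdek's work as \cite[Theorem 12.2]{avdek2020combinatorial} and used as a black box, so there is no in-paper proof to measure your argument against. What you have written is a sketch of how Avdek's proof goes, and at that level it is broadly accurate: the constant term is the signed count of rigid planes in $\R_s\times\R^3_{\Lambda^+}$ positively asymptotic to $(c_1\ldots c_n)$ with no negative ends, and the content of the theorem is that the embedded RSFT disk contributes exactly one transversely cut out point to this count.

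As a proof, however, your uniqueness step is a genuine gap. You assert that any other contributing curve, ``upon projecting, must again cover the region $R_i$, so it coincides with $u$.'' A plane in the surgered manifold does not project to the Lagrangian projection of $\Lambda$; only after a neck-stretching degeneration does one obtain a building with a level carried by the symplectization of $\R^3$ with boundary on $\R_s\times\Lambda$, and a priori that building can have several levels and components, or a top-level disk with the same positive asymptotics $c_1,\ldots,c_n$ whose projected image covers a different union of regions with the same total area $\sum_i\cA(c_i)$. Ruling these configurations out, and then gluing the surviving disk to an honest plane in $\R_s\times\R^3_{\Lambda^+}$ for a fixed small $\epsilon$ with the requisite transversality and orientation bookkeeping, is exactly where the real work in Avdek's argument lies; none of it is supplied here, and \Cref{prop:period} controls actions of orbits versus chord words, not the geometry of competing curves. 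So the proposal correctly identifies what must be shown and the overall shape of the argument, but the decisive steps --- uniqueness of the contributing configuration and the gluing/compactness/transversality package that yields a single transverse point --- are asserted rather than established.
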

\section{Torus knots}\label{s3}

We consider a Legendrian $(p,q)$ torus knot $\Lambda$ with maximal Thurston–Bennequin invariant $pq-p-q$, where $p, q$ are coprime positive integers, both greater than $1$. By the classification of Legendrian torus knots in \cite{EH}, there is a unique Legendrian $(p,q)$ torus knot whose Thurston–Bennequin invariant is $pq-p-q$. Without loss of generality, we assume that $p<q$. The Legendrian $(p,q)$ torus knot $\Lambda$ is presented by its Lagrangian projection. See Figure~\ref{figure:Torus knot } for an example. In the lower part of the Lagrangian projection, there is a braid area. We label the regions in the braid area by $R_{i,j}$ for $1\le i\le p-1$ and $1\le j \le q-1$.  The remaining regions are labeled by $A_1,B_1,\ldots,A_p,B_p$, where $B_i$ is the small disk next to $\alpha_i$ and $A_i$ is the region on the other side. The vertices of $R_{i,j}$ are $r_{i,j},r_{i+1,j},r_{i,j+1},r_{i+1,j+1}$, if one of the indices exceed the bounds then $R_{i,j}$ degenerates to a triangle. We orient the knot such that in the braid area it is going from left to  right.

\begin{figure}[htb] {\small
\begin{overpic}%[grid,tics=10]
{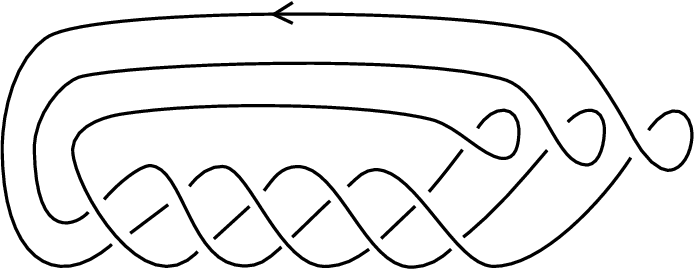}
\put(180, 60){$A_1$}
\put(195, 83){$A_2$}
\put(210, 105){$A_3$}

\put(235, 60){$B_1$}
\put(275, 60){$B_2$}
\put(315, 60){$B_3$}

\put(219, 70){$\alpha_1$}
\put(262, 74){$\alpha_2$}
\put(302, 70){$\alpha_3$}

\put(42, 48){$r_{1,1}$}
\put(80, 48){$r_{1,2}$}
\put(115, 48){$r_{1,3}$}
\put(155, 48){$r_{1,4}$}
\put(195, 48){$r_{1,5}$}

\put(50, 0){$r_{2,1}$}
\put(92, 0){$r_{2,2}$}
\put(130, 0){$r_{2,3}$}
\put(172, 0){$r_{2,4}$}
\put(213, 0){$r_{2,5}$}

\put(55, 28){$R_{1,1}$}
\put(70, 10){$R_{2,1}$}
\put(95, 28){$R_{1,2}$}
\put(110, 10){$R_{2,2}$}
\put(130, 28){$R_{1,3}$}
\put(145, 10){$R_{2,3}$}
\put(175, 28){$R_{1,4}$}
\put(190, 10){$R_{2,4}$}

\end{overpic}}
\caption{Markings of regions and crossings.}
\label{figure:Torus knot  }
\end{figure}

From \eqref{eqn:mu}, we can compute
\begin{equation}\label{eqn:mu_torus}
\cI_{\Lambda}(\mu) = \frac{-1}{(p-1)(q-1)}\left(\sum_{j=1}^{q-1}\sum_{i=1}^{p-1} (p-i)R_{i,j}+\sum_{i=1}^p ((p+1-i)A_i+(p-1-i)B_i)\right).
\end{equation}
Now note that we can present the knot, such that the area of $B_1$ is much smaller than the areas of $B_2$, $B_3$, $\cdots$, $B_p$. Thus in view of the period estimate in \Cref{prop:period}, $\partial_{CH}(q_{(\alpha_1)})$ only involves Reeb orbits from the $r_{i,j}$ chords. By \Cref{thm:curve},  the constant term of $\partial_{CH}(q_{(\alpha_1)})$ is $\pm 1$. Moreover, by \Cref{prop:positive}, the subalgebra with SFT degree $0$ is generated by orbits from single chords. So the non-constant term of $\partial_{CH}(q_{(\alpha_1)})$ is generated by orbits coming from single chords. Therefore, to prove $(\R^3_{\Lambda^+}, \xi_{\Lambda^+})$ has vanishing contact homology, it suffices to prove the following claim:
\begin{claim}\label{claim} The inequality 
\begin{equation}\label{eqn:inequality}
\cI_{\Lambda}(\alpha_1)-\sum_{1\le i \le p-1, 1\le j \le q} x_{i,j}\cI_{\Lambda}(r_{i,j})\ge 0, \quad x_{i,j}\ge 0
\end{equation}
has only the trivial solution $x_{i,j}=0$ for all $1\le i \le p-1$ and $1\le j \le q$.
\end{claim}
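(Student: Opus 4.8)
The plan is to compute all the relevant intersection gradings explicitly in the region basis $\{R_{i,j},A_i,B_i\}$, convert the vector inequality \eqref{eqn:inequality} into a finite system of scalar inequalities by reading off one coefficient per region, and then show that this system together with $x_{i,j}\ge 0$ forces every $x_{i,j}$ to vanish. First I would pin down the gradings. Using the winding/linking formula for $\cI_{\Lambda}(c)$ together with the formula \eqref{eqn:mu_torus} for $\cI_{\Lambda}(\mu)$, I would establish the local recursion
$$\cI_{\Lambda}(r_{i,j})-\cI_{\Lambda}(r_{i+1,j})+\cI_{\Lambda}(r_{i,j+1})-\cI_{\Lambda}(r_{i-1,j+1})=R_{i,j},$$
reading each term off the push-out picture, with the convention that a term is dropped whenever $(i,j)$ leaves the range $1\le i\le p-1$, $1\le j\le q$. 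Combined with the boundary relations expressing the extreme chords in terms of $\mu$ (the two-dimensional analogue of the relation $\cI_{\Lambda}(r_1-k\mu)=\cdots$ in the $(2,2k+1)$ case), this determines every $\cI_{\Lambda}(r_{i,j})$. Since the push-out of $\alpha_1$ bounds the small disk $B_1$ and winds around nothing else, a short computation gives $\cI_{\Lambda}(\alpha_1)=B_1$.

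Next I would substitute these into \eqref{eqn:inequality} and take the coefficient of each basis region, obtaining the scalar system. Because $\cI_{\Lambda}(\alpha_1)=B_1$ has a positive coefficient only in the $B_1$ slot, the $B_1$-inequality is the only slack one, while every region $R\neq B_1$ contributes a homogeneous constraint $\sum_{i,j}x_{i,j}\,[\cI_{\Lambda}(r_{i,j})]_R\le 0$, where $[\,\cdot\,]_R$ denotes the $R$-coefficient. I expect the coefficients of the side regions $A_i,B_i$ to yield global balance (in)equalities — a higher-rank analogue of the opposite inequalities from $C$ and $D$ that forced $\sum_{\mathrm{odd}}x_j=\sum_{\mathrm{even}}x_j$ in the trefoil computation — while the coefficients of the braid regions $R_{i,j}$ give local constraints comparing neighbouring rows and columns.

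The cleanest way to finish is to exhibit a single nonnegative test functional $\phi=\sum_{R\neq B_1}w_R\,R^{*}$ with $w_R\ge 0$ such that $\phi(\cI_{\Lambda}(r_{i,j}))>0$ for every $(i,j)$. Applying $\phi$ to \eqref{eqn:inequality} and using $\phi(\cI_{\Lambda}(\alpha_1))=\phi(B_1)=0$ yields $\sum_{i,j}x_{i,j}\,\phi(\cI_{\Lambda}(r_{i,j}))\le 0$ with all summands nonnegative, which forces $x_{i,j}=0$. Equivalently, one can peel off chords one at a time: use the outermost ($A_p,B_p$) coefficients to extract an equality, feed it into the boundary local inequalities to kill the extreme $x_{i,j}$, and induct inward, reducing the $(p-1)\times q$ grid to a smaller one exactly as the trefoil argument reduced $k$ to $k-1$.

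The main obstacle is the two-dimensional bookkeeping. In the $(2,2k+1)$ case the crossings form a single chain, so the peel-off is linear and transparent; here the $r_{i,j}$ occupy a $(p-1)\times q$ grid, and one must fix an elimination order (say reverse-lexicographic in $(i,j)$) and check that at each stage the accumulated equalities plus one fresh local inequality still isolate a single variable with the correct sign. By LP duality the existence of the certificate $\phi$ is equivalent to the claim, so constructing $\phi$ explicitly — equivalently, producing the dual Farkas multipliers $w_R$ uniformly in $(p,q)$ — is the crux; I would first solve the small-$q$ and $p=2$ cases to guess the weights and then verify positivity of $\phi(\cI_{\Lambda}(r_{i,j}))$ for all $(i,j)$.
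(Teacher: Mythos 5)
Your overall framework is sound---reading off one scalar inequality per region and exhibiting a nonnegative dual functional $\phi$ with $\phi(B_1)=0$ and $\phi(\cI_{\Lambda}(r_{i,j}))>0$ for all $(i,j)$ would indeed prove the claim, and this Farkas-type logic is the same logic the paper uses---but the proposal stops exactly where the proof has to begin: you never produce the certificate, and you explicitly defer its construction to experimentation with small cases. The paper's argument consists precisely of that missing content. It works with two concrete functionals: the aggregate top-row coefficient $\sum_{j=1}^{q-1}\cI_{\Lambda}(\,\cdot\,)_{R_{1,j}}$ and the single coefficient $\cI_{\Lambda}(\,\cdot\,)_{A_2}$. \Cref{lemma:sum} shows the first is nonnegative on every $\cI_{\Lambda}(r_{a,b})$ and strictly positive exactly when the push-out $P_{a,b}$ contains the most interior long overhead arc; \Cref{lemma:A2} shows the second is nonnegative on the remaining chords and strictly positive exactly when $P_{a,b}$ contains a complete descending arc. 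These sign statements are not formal consequences of a local recursion: they come from a strand-by-strand analysis of the push-outs (each step-up contributes a meridian $\mu$, each $\mu$ contributes $-1$ to the aggregate functional, each horizontal piece followed by a step-up contributes $+1$, and so on). The final step, showing that no chord evades both tests, is a short combinatorial argument using $p<q$ and coprimality to rule out a one-strand push-out with no complete descending arc. None of these three ingredients appears in your writeup.

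Two concrete risks in the route you sketch are worth naming. First, every chord's push-out links $\Lambda$, so every $\cI_{\Lambda}(r_{a,b})$ contains a multiple of $\cI_{\Lambda}(\mu)$ and hence has rational coefficients with denominator $(p-1)(q-1)$ in essentially every region; each region's scalar inequality therefore involves nearly all of the $x_{i,j}$ with both signs, and a reverse-lexicographic elimination that isolates one variable at a time, modelled on the $p=2$ chain, does not obviously exist (the paper does not attempt one). Second, the two functionals the paper uses are not simultaneously nonnegative on all chords ($\cI_{\Lambda}(\,\cdot\,)_{A_2}$ is only shown to be nonnegative on chords whose push-out misses the most interior long overhead arc), so the argument is genuinely sequential rather than a single positive combination; producing one $\phi$ as you propose would require an additional lower bound on the $A_2$-coefficients of the first class of chords. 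As it stands the proposal is a plausible plan rather than a proof.
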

The claim implies that $\partial_{CH}(q_{(\alpha_1)})$ contains no terms involving $\prod_{i,j}(r_{i,j})^{x_{i,j}}$ if any of $x_{i,j}$ are positive.

\begin{proposition}
    $\cI_{\Lambda}(\alpha_1)=B_1$.
\end{proposition}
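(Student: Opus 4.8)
The plan is to combine the push-out description of $P(\alpha_1)$ with the formula $\cI_{\Lambda}(c) = \sum_i wind(P(c),R_i)R_i + lk(P(c),\Lambda)\cI_{\Lambda}(\mu)$ established above, specialized to the single chord $c=\alpha_1$. The geometric input I would isolate first is that $B_1$ is a monogon: a region of the complement of the Lagrangian projection whose only corner is the crossing $\alpha_1$. This is the same structure that produces, via \Cref{thm:curve}, the embedded RSFT disk over $B_1$ with a single positive puncture at $\alpha_1$, and hence the constant term $\pm 1$ of $\partial_{CH}(q_{(\alpha_1)})$ used in the surrounding discussion.

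Next I would read off the push-out loop $P(\alpha_1)$ from the projection. By definition it starts at the endpoint of the chord $\alpha_1$, runs along $\Lambda$ (pushed slightly to the left) in the direction of the orientation until it reaches the starting point of $\alpha_1$, and then closes along the chord. For the orientation used here, this path traverses the short boundary arc of the monogon $B_1$, so that $\pi_{x,y}(P(\alpha_1)) = \partial B_1$ (the chord projecting to the single point $\alpha_1$), and $P(\alpha_1)$ bounds a disk $D$ with $\pi_{x,y}(D)=\overline{B_1}$. Since $B_1$ is a component of the complement of $\pi_{x,y}(N_{\epsilon}(\Lambda))$, this disk can be chosen disjoint from $\Lambda$ and from the surgery region. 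Consequently $wind(P(\alpha_1),B_1)=1$ while $wind(P(\alpha_1),R_i)=0$ for every other region $R_i$, and $lk(P(\alpha_1),\Lambda)=0$ because $P(\alpha_1)$ bounds a disk disjoint from $\Lambda$. Substituting into the formula yields $\cI_{\Lambda}(\alpha_1) = B_1 + 0\cdot\cI_{\Lambda}(\mu) = B_1$. Equivalently, one may compute $\cI_{\Lambda}(P(\alpha_1))$ directly from the definition by intersecting the bounding disk $D$ with the vertical lines $(x_i,y_i)\times\R_z$: since $\pi_{x,y}(D)=\overline{B_1}$, only the line over $B_1$ is met, and exactly once.

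The step I expect to be the main obstacle is justifying that $P(\alpha_1)$ follows the short arc bounding $B_1$ rather than wrapping the long way around $\Lambda$; this reduces to checking, from the Lagrangian projection and the convention that $\Lambda$ runs left-to-right in the braid area, that the knot orientation carries the endpoint of the chord $\alpha_1$ back to its starting point along the monogon boundary. Once this orientation bookkeeping is settled, the winding-number and linking-number computations are routine, and the monogon structure of $B_1$ forces the clean value $\cI_{\Lambda}(\alpha_1)=B_1$.
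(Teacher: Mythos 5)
Your final answer is correct and your general strategy (express $\cI_{\Lambda}(\alpha_1)$ via the winding/linking formula for a push-out loop) is the right one, but the key geometric step rests on a claim that goes the wrong way under the paper's conventions. You assert that the positive push-out $P(\alpha_1)$ — which by definition starts at the endpoint of the chord and follows the orientation of $\Lambda$ back to its starting point — traverses the short boundary arc of the monogon $B_1$. In fact, with the stated conventions (the knot running left-to-right in the braid area, the chord oriented in the $+z$ direction), following $\Lambda$ from the endpoint of $\alpha_1$ to its starting point takes you the long way around, through the entire braid area. This is precisely why the paper's proof does \emph{not} use $P(\alpha_1)$: it switches to the negative push-out $\overline{\eta}$ of \cite[\S 9.4]{avdek2020combinatorial}, explicitly ``as it traverses a smaller portion of the knot,'' and it is that loop which is the short monogon boundary with winding number $1$ around $B_1$, zero around everything else, and zero linking with $\Lambda$. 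You flagged this orientation check as the main obstacle and claimed it reduces to routine bookkeeping, but the bookkeeping comes out against you, so the ``routine'' winding and linking computation as written is for the wrong loop.

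The gap is repairable in either of two ways. The cleaner fix is the paper's: note that the intersection grading is independent of the choice of capping path — the positive and negative push-outs of $(\alpha_1)$ differ by a cycle homologous to $\lambda+(tb(\Lambda)+1)\mu$ (up to sign), which is null-homologous in $\R^3_{\Lambda^+}\setminus(\cup_i\R_z\times R_i)$ and has vanishing intersection grading by \eqref{eqn:mu} — and then run your winding/linking argument on the negative push-out, where it is genuinely a monogon boundary bounding a disk disjoint from $\Lambda$. The more laborious fix is to keep $P(\alpha_1)$ and actually compute its (many nonzero) winding numbers and its linking number with $\Lambda$; the $\cI_{\Lambda}(\mu)$ correction from \eqref{eqn:mu_torus} would then cancel everything except $B_1$. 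Without one of these, the proof as proposed does not establish the stated identity.
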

\begin{proof}
    In the terminology of \cite[\S 9.4]{avdek2020combinatorial}, we use the negative push-out $\overline{\eta}$ as it traverses a smaller portion of the knot. Since the choice of capping path does not affect the intersection grading, the claim follows from \Cref{fig:alpha}.
    \end{proof}

\begin{figure}[htb] {\small
\begin{overpic}%[grid,tics=10]
{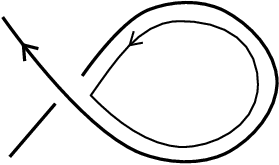}
\put(-8, 30){$\cdots$}
\put(25, 45){$\alpha_1$}
\put(82, 35){$B_1$}
    \end{overpic}}
    \caption{Push-out of $(\alpha_1)$ using the negative capping path.}
    \label{fig:alpha}
    \end{figure}

We use $P_{i,j}$ to denote the push-out of the Reeb orbit corresponding to $r_{i,j}$. By a  strand, we mean a piece of $P_{a,b}$ starts at the leftmost end of the braid area and goes around once back to the head of the braid area.
\begin{lemma}\label{lemma:sum}
    For all $1\le a \le p-1, 1\le b \le q$, we have
    $$\sum_{j=1}^{q-1}\cI_{\Lambda}(r_{a,b})_{R_{1,j}}\ge 0.$$
    Moreover, the above quantity is positive if and only if $P_{a,b}$ contains the most interior long overhead arc as shown in Figure~\ref{figure:Torus knot 1 }.
\end{lemma}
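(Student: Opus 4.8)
The plan is to combine the formula $\cI_{\Lambda}(c) = \sum_i wind(P(c),R_i)\,R_i + lk(P(c),\Lambda)\,\cI_{\Lambda}(\mu)$ from the preceding proposition with the explicit expression \eqref{eqn:mu_torus} for $\cI_{\Lambda}(\mu)$, and thereby reduce the statement to a direct count of winding and linking numbers read off from the Lagrangian projection. First I would extract the coefficient of $R_{1,j}$ in $\cI_{\Lambda}(r_{a,b})$. By the proposition this coefficient is $wind(P_{a,b},R_{1,j}) + lk(P_{a,b},\Lambda)\,\cI_{\Lambda}(\mu)_{R_{1,j}}$, and from \eqref{eqn:mu_torus} the coefficient of $R_{1,j}$ in $\cI_{\Lambda}(\mu)$ is $\tfrac{-(p-1)}{(p-1)(q-1)}=\tfrac{-1}{q-1}$, independent of $j$. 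Summing over $j=1,\ldots,q-1$ and using $\sum_{j=1}^{q-1}\cI_{\Lambda}(\mu)_{R_{1,j}}=-1$ yields the clean identity
\[
\sum_{j=1}^{q-1}\cI_{\Lambda}(r_{a,b})_{R_{1,j}} = \sum_{j=1}^{q-1} wind(P_{a,b},R_{1,j}) - lk(P_{a,b},\Lambda).
\]
Thus the lemma becomes the purely geometric assertion that the total winding of the push-out $P_{a,b}$ about the top row of braid regions is at least its linking number with $\Lambda$, with equality precisely when $P_{a,b}$ omits the most interior long overhead arc.

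Next I would compute both sides from the Lagrangian projection using the strand decomposition of $P_{a,b}$. For the winding sum I would use vertical rays emanating upward from a point of each $R_{1,j}$ into the unbounded region $R_0$; then $wind(P_{a,b},R_{1,j})$ equals the signed number of overhead arcs of $P_{a,b}$ crossing that ray, and I would check that every such crossing has the same (positive) sign, so that each term, and hence the whole sum, is non-negative and is evaluated arc by arc. For the linking number I would push $P_{a,b}$ vertically off $\Lambda$ and count the signed crossings with $\Lambda$, again organized by the overhead arcs of the individual strands. The core claim is then a strand-by-strand comparison: every long overhead arc of $P_{a,b}$ contributes equally to the winding sum and to $lk(P_{a,b},\Lambda)$, so these contributions cancel in the difference, with the single exception of the most interior long overhead arc, which contributes one more unit to the winding sum than to the linking number (because along that arc the push-out runs closest to the braid and picks up a winding that is unmatched by a linking crossing). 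Summing these local contributions gives a non-negative difference that is positive exactly when $P_{a,b}$ traverses the innermost overhead arc.

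The main obstacle is precisely this last, local bookkeeping: for each strand of $P_{a,b}$ and each region $R_{1,j}$ one must track how the strand's overhead arc winds about $R_{1,j}$ and how it links the parallel strands of $\Lambda$, and then verify that these two counts agree except for the innermost arc. Correctly handling the nested rainbow arcs and singling out the most interior one is the delicate point; once the per-arc contributions are pinned down, the remainder of the proof is just the elementary algebra of the reduction in the first step.
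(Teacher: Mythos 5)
Your opening reduction is correct and is essentially what the paper's proof uses implicitly: combining $\cI_{\Lambda}(c)=\sum_i wind(P(c),R_i)R_i+lk(P(c),\Lambda)\cI_{\Lambda}(\mu)$ with \eqref{eqn:mu_torus} and $\sum_{j=1}^{q-1}\cI_{\Lambda}(\mu)_{R_{1,j}}=-1$ does turn the lemma into the assertion $\sum_j wind(P_{a,b},R_{1,j})\ge lk(P_{a,b},\Lambda)$, with equality to be characterized by the innermost overhead arc. The problem is that everything after this reduction --- which is the entire content of the lemma --- is asserted rather than proved, and the mechanism you assert is not the right one. First, the claim that $wind(P_{a,b},R_{1,j})$ is computed by an upward ray meeting only overhead arcs, all with the same sign, is incorrect: that ray also meets the top-level horizontal pieces of $P_{a,b}$ inside the braid area, and those cross the ray with the opposite orientation to the returning overhead arcs. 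Second, the proposed per-arc cancellation (``each long overhead arc contributes equally to the winding sum and to $lk(P_{a,b},\Lambda)$'') does not reflect where the two quantities are actually localized: the linking number is carried by the crossings --- one $+\mu$ for each step-up of a strand in the braid area, \emph{including} the crossing at $r_{a,b}$ itself (which your bookkeeping never mentions), and one $-\mu$ for each passage through an $\alpha_i$ --- while the relevant winding contributions are carried by the horizontal pieces of the strands inside the braid area, not by the overhead arcs.

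The paper's argument pairs each contributing horizontal piece with the step-up that follows it (net contribution $0$), and pairs the $+1$ coming from each strand's $\alpha_i$ crossing with the step-up that the strand performs at the leftmost column of the braid area, when that step-up exists. The leftover $+1$ occurs exactly for a strand that instead descends completely at the leftmost column, and such a strand is precisely the one carrying the most interior long overhead arc. Your proposal never identifies these pairings, does not account for the chord modification at $r_{a,b}$, and explicitly defers ``the delicate point'' of singling out the innermost arc; as written, both the inequality and, more seriously, the equality characterization (which is what the proof of \Cref{thm:main} actually uses) remain unproven.
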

\begin{proof}
    We consider a strand in $P_{a,b}$. Each step-up (see \Cref{figure:Torus knot 1 }) of the strand, including the crossing at $r_{a,b}$, contributes a $\mu$ from linking. This contribution translates to $\frac{-1}{(p-1)(q-1)}((q-1)(p-1))=-1$ for $\sum_{j=1}^{q-1}\cI_{\Lambda}(r_{a,b})_{R_{1,j}}$. Each crossing at $\alpha_i$, contributes $-\mu$, i.e.\ $1$ to $\sum_{j=1}^{q-1}\cI_{\Lambda}(r_{a,b})_{R_{1,j}}$. Furthermore, each horizontal piece (see \Cref{figure:Torus knot 1 }) of the strand in the braid area contributes $1$ to $\sum_{j=1}^{q-1}\cI_{\Lambda}(r_{a,b})_{R_{1,j}}$ if it is followed by a step-up (in the direction of the knot, i.e.\ from left to right in region with step-ups).  Consequently, the total contribution is at least $0$, as each horizontal piece that contributes is cancelled with the step-up followed, and a positive contribution from $\alpha_i$ will cancel with the negative contribution from the step-up from the leftmost descending arc in the braid area if it exists. Therefore we have $\sum_{j=1}^{q-1}\cI_{\Lambda}(r_{a,b})_{R_{1,j}}\ge 0$, and it is positive if there is no step-up at the leftmost descending arc in the braid area. This means $P_{a,b}$ contains the whole descending arc in the leftmost end of the braid area. Hence $P_{a,b}$ contains the most interior long overhead arc.
\end{proof}

\begin{figure}[htb] {\small
\begin{overpic}%[grid,tics=10]
{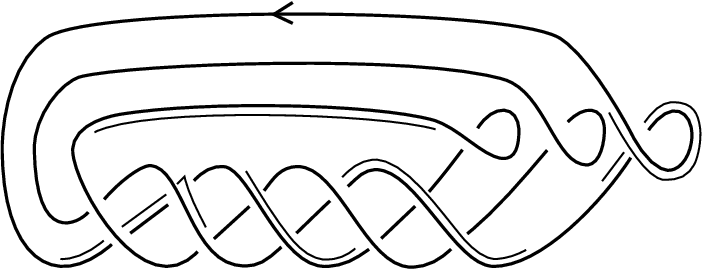}
\put(40, 13){$1$}
\put(73, 38){$1$}
\put(155, 10){$2$}
\put(190, 52){$3$}
\put(303, 75){$4$}
\put(120, 64){$5$}
\end{overpic}}
\caption{In the  push-out, 1 denotes a step-up,
2 denotes a horizontal piece, 3 denotes a complete descending arc, 4 denotes a piece near $\alpha_i$, and 5 denotes the most interior long overhead arc.  }
\label{figure:Torus knot 1 }
\end{figure}

By a complete descending arc, we mean an arc in $P_{a,b}$ within the braid area that descends from the top level to the bottom level and contains the two horizontal pieces at the ends. In other words, it does not contain the chord modification along the whole descending arc, see \Cref{figure:Torus knot 1 }.
\begin{lemma}\label{lemma:A2}
    If $P_{a,b}$ does not contain the most interior long overhead arc, then 
    $$\cI_{\Lambda}(r_{a,b})_{A_2}\ge 0.$$
    Moreover, the above quantity is positive if and only if $P_{a,b}$ contains a complete descending arc.
\end{lemma}
\begin{proof}
    Since $P_{a,b}$ does not contain the most interior long overhead arc,  any strand in $P_{a,b}$ must cover the region $A_2$ positively, i.e.\ each strand in  $P_{a,b}$ contributes $1$ to $\cI_{\Lambda}(r_{a,b})_{A_2}$. Each step-up in a strand within the braid area contributes one copy of $\mu$ to $\cI_{\Lambda}(r_{a,b})$, while the crossing at $\alpha_i$ contributes $-\mu$ to $\cI_{\Lambda}(r_{a,b})$. An occurrence of $\mu$ contributes $\frac{1-p}{(p-1)(q-1)}=\frac{-1}{q-1}$ to $\cI_{\Lambda}(r_{a,b})_{A_2}$. Each strand of $P_{a,b}$ including the crossing at $\alpha_i$ contributes at most $q-1$ copies of $\mu$. When the extremum is achieved,  the strand has a step-up at each descending arc of the braid area, or equivalently, the strand does not contain a complete descending arc. Furthermore, the total contribution to $\cI_{\Lambda}(r_{a,b})_{A_2}$ is $0$. When the strand has a complete descending arc,  the total contribution to $\cI_{\Lambda}(r_{a,b})_{A_2}$ is positive as we do not have enough $\mu$.
\end{proof}

\begin{proof}[Proof of \Cref{thm:main}]
    To solve \eqref{eqn:inequality}, by \Cref{lemma:sum}, we see that $x_{a,b}=0$ if $P_{a,b}$ contains the most interior long overhead arc. Then by \Cref{lemma:A2}, $x_{a,b}=0$ if $P_{a,b}$ does not contain the most interior long overhead arc and has no complete descending arc. As a consequence, we are left with $x_{a,b}$ such that $P_{a,b}$ contains no complete descending arc. First of all, $P_{a,b}$ must have only one strand otherwise it will contain a complete descending arc since $q>p$. Moreover, $\lfloor q/p \rfloor=1$ for otherwise, it will contain a complete descending arc. There is at most one such $P_{a,b}$. To have both the one-strand condition and the no complete descending arc condition, we must have $p=a+1,q=a$, contradiction. Therefore $x_{a,b}=0$ for all $a,b$. Then by \Cref{prop:no_curve} and \Cref{thm:curve}, we have $\partial_{CH}(q_{(\alpha_1)})=\pm 1$. Hence, the contact homology of $(\R^3_{\Lambda^+}, \xi_{\Lambda^+})$ vanishes, and so does the contact homology of $(S^3_{\Lambda^+}, \xi_{\Lambda^{+}})$ by \cite[Theorem 2.6(4)]{avdek2020combinatorial}.
    
    Since $tb(\Lambda)=pq-p-q=2g_{s}(\Lambda)-1$, where $g_{s}(\Lambda)$ is the slice genus of $\Lambda$,  according to the proof of \cite[Theorem 1.1]{LS}, $(S^3_{\Lambda^+}, \xi_{\Lambda^{+}})$ is tight. Moreover, the contact 3-manifold obtained by contact $1/k$ surgery along $\Lambda$ is tight.

    The contact $1/k$ surgery along $\Lambda$ is equivalent to $k$ contact $+1$
surgeries along $k$ Legendrian push-offs of $\Lambda$ \cite{DG}. So there exists a Liouville cobordism from the contact 3-manifold obtained by contact $1/k$ surgery along $\Lambda$ to  $(S^3_{\Lambda^+}, \xi_{\Lambda^+})$. By the Liouville functoriality of contact homology  \cite{P, BH}, the vanishing of the  contact homology of $(S^3_{\Lambda^+}, \xi_{\Lambda^+})$ implies the vanishing of the contact homology of the contact 3-manifold obtained by contact $1/k$ surgery along $\Lambda$.\end{proof}

\section{Some other rainbow closures of positive braids}
In this section, we consider some Legendrian knots which are Legendrian rainbow closures of positive braids of $p$ strands. The braids are generated by the generators $\sigma_1$, $\sigma_2$, $\cdots$, $\sigma_{p-1}$.

\begin{lemma}\label{lem:tight}
Suppose $\Lambda$ is the Legendrian rainbow closure of a positive braid whose Thurston-Bennequin invariant is not $-1$. Then the contact 3-manifold $(S^{3}_{1/k}(\Lambda), \xi_{1/k}(\Lambda))$ is tight for $k\in\N_+$.
\end{lemma}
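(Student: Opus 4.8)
=== PLAN ===

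The plan is to establish tightness of $(S^{3}_{1/k}(\Lambda), \xi_{1/k}(\Lambda))$ by invoking the main theorem of \cite{LS}, which detects tightness of contact surgeries via properties of the underlying Legendrian knot. The key input that makes this work is the hypothesis that $\Lambda$ is a Legendrian rainbow closure of a positive braid. By \cite[Theorem 2]{T}, such a $\Lambda$ attains the maximal Thurston–Bennequin invariant of its underlying topological knot type, so $tb(\Lambda) = \overline{tb}(K)$ where $K$ is the smooth knot type. The proof therefore reduces to checking that the numerical hypothesis required by \cite{LS}—a relation between the Thurston–Bennequin invariant and the slice genus—is satisfied.

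First I would recall that a positive braid closure is a positive (in fact fibered) knot, so its slice genus $g_s(\Lambda)$ equals its Seifert genus, and both are computed by the Bennequin-type equality. For a rainbow closure of a positive braid, the Seifert surface obtained from Seifert's algorithm (equivalently, the fiber surface) realizes the slice genus, and the sharp Bennequin inequality becomes an equality: $tb(\Lambda) = 2g_s(\Lambda) - 1$. This is precisely the borderline case in which the surgery formula of \cite{LS} certifies tightness; indeed this is exactly the mechanism already used at the end of the proof of \Cref{thm:main}, where the identity $tb(\Lambda) = pq-p-q = 2g_s(\Lambda)-1$ for torus knots was the decisive ingredient. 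The hypothesis $tb(\Lambda)\ne -1$ in the present lemma ensures we are not in the degenerate surgery coefficient where the argument breaks down and guarantees that the slice genus is strictly positive.

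The steps, in order, are: (i) identify $\Lambda$ with a positive braid closure and record that it is a positive knot whose slice genus equals its Seifert genus; (ii) verify the sharp Bennequin equality $tb(\Lambda) = 2g_s(\Lambda)-1$, using that the rainbow closure attains the maximal Thurston–Bennequin invariant together with the genus computation for positive braids; (iii) apply the proof of \cite[Theorem 1.1]{LS} to conclude that $(S^3_{\Lambda^+},\xi_{\Lambda^+})$, i.e.\ the contact $+1$ surgery, is tight; (iv) pass from the single $+1$ surgery to the contact $1/k$ surgery for arbitrary $k\in\N_+$. For step (iv), I would use that the contact $1/k$ surgery along $\Lambda$ is equivalent to $k$ successive contact $+1$ surgeries along $k$ Legendrian push-offs of $\Lambda$ \cite{DG}, and that the tightness statement of \cite{LS} applies uniformly to all such $1/k$ surgeries—exactly as invoked in the torus-knot case.

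The main obstacle I anticipate is step (ii): establishing the sharp Bennequin equality $tb(\Lambda) = 2g_s(\Lambda)-1$ in full generality for rainbow closures of positive braids, rather than for the specific torus-knot family. While the maximal-$tb$ property from \cite[Theorem 2]{T} supplies the value of $tb(\Lambda)$, one must separately confirm that the slice genus of a positive braid closure equals $(c - p + 1)/2$ (with $c$ the number of crossings and $p$ the number of strands) and that this matches the maximal Thurston–Bennequin bound, so that the two quantities align into the equality demanded by \cite{LS}. This is a standard fact about positive braids (the Seifert algorithm surface is a fiber surface realizing the genus, and the slice–Bennequin inequality is sharp), but it is the place where the argument genuinely uses the positivity hypothesis, and it is where I would spend the most care to ensure the relation holds for every knot covered by the lemma and not merely for torus knots.
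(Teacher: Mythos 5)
Your proposal is correct and follows essentially the same route as the paper: both reduce the lemma to the sharp equality $tb(\Lambda)=2g_{s}(\Lambda)-1$ (with $g_{s}(\Lambda)>0$ forced by $tb(\Lambda)\neq -1$) and then cite \cite{LS} for tightness of all $1/k$ surgeries. The only cosmetic difference is that the paper derives the equality by a self-contained squeeze, $w-p=tb(\Lambda)\leq 2g_{s}(\Lambda)-1\leq 2g_{3}(\Lambda)-1\leq w-p$, using the explicit Seifert surface of Euler characteristic $p-w$, whereas you invoke \cite{T} together with the genus formula for positive braid closures; note also that the \cite{DG} push-off decomposition is not needed here, since \cite{LS} applies to $1/k$ surgeries directly.
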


\begin{proof}
Suppose the closed positive braid representation of $\Lambda$ has $p$ strands and consists of $w$ positive generators. From the Lagrangian projection, it follows that $tb(\Lambda)=w-p.$ On the other hand, there exists a Seifert surface of $\Lambda$ with Euler characteristic $p-w.$ Hence we have $$w-p=tb(\Lambda)\leq 2g_{s}(\Lambda)-1\leq 2g_{3}(\Lambda)-1\leq w-p,$$  where $g_{s}(\Lambda)$ is the slice genus of $\Lambda$, and $g_{3}(\Lambda)$ is the Seifert genus of $\Lambda$. Thus $\Lambda$ attains the maximal Thurston-Bennequin invariant, which is $tb(\Lambda)=2g_{s}(\Lambda)-1$. Moreover, since $tb(\Lambda)\neq-1$, it follows that $g_{s}(\Lambda)>0$. According to \cite{LS}, $(S^{3}_{1/k}(\Lambda), \xi_{1/k}(\Lambda))$ is tight for $k\in\N_+$.
\end{proof}

\begin{figure}[htb] {\small
\begin{overpic}%[grid,tics=10]
{twistedtorus.eps}
\put(302, 105){$\alpha_1$}
\put(354, 105){$\alpha_2$}
\put(405, 105){$\alpha_3$}
\put(456, 105){$\alpha_4$}
\put(230, 90){$A_1$}
\put(250, 120){$A_2$}
\put(270, 140){$A_3$}
\put(290, 165){$A_4$}
\put(320, 90){$B_1$}
\put(370, 90){$B_2$}
\put(420, 90){$B_3$}
\put(470, 90){$B_4$}
\put(112, 55){$R_1$}
\put(150, 55){$R_2$}
\put(190, 40){$R_3$}
\put(222, 41){$R_4$}
\put(260, 43){$R_5$}
\end{overpic}}
\caption{ A Legendrian knot which is the rainbow closure of the braid $(\sigma_1\sigma_2\sigma_3)^{3}(\sigma_2\sigma_3)^{3}.$  }
\label{fig:twistedtorus }
\end{figure}

Let $\Lambda$ be a Legendrian knot which is the rainbow closure of the positive braid $$(\sigma_{1}\cdots\sigma_{p-2}\sigma_{p-1})^{q_1}(\sigma_{p-p_{2}+1}\cdots\sigma_{p-2}\sigma_{p-1})^{q_2}\cdots (\sigma_{p-p_{N}+1}\cdots\sigma_{p-2}\sigma_{p-1})^{q_N}.$$
Here $p_2<p$ and $p_i\ne p_{i+1}\le p$. The braid area can be divided into $N$ blocks which correspond to $(\sigma_{1}\cdots\sigma_{p-2}\sigma_{p-1})^{q_1}$, $(\sigma_{p-p_{2}+1}\cdots\sigma_{p-2}\sigma_{p-1})^{q_2}$, $\cdots,$ $(\sigma_{p-p_{N}+1}\cdots\sigma_{p-2}\sigma_{p-1})^{q_N},$ respectively. See Figure~\ref{fig:twistedtorus } for an example. 
%from gluing $(p_i,q_i)$ knots at the bottom adjacent strands for $1\le i \le N$. 
Then we have
$$tb(\Lambda)=\sum_{i=1}^N (p_i-1)q_i-p.$$
 We label the regions above the braid area, as in the torus knot case, by $A_1,\ldots, A_p$ and $B_1,\ldots,B_p$, and crossings there by $\alpha_1,\ldots,\alpha_p$. The union of the first rows of $N$ blocks is ordered by $R_1,\ldots,R_{\sum_{i=1}^N q_{i}-1}$, as shown in Figure~\ref{fig:twistedtorus }. Due to the form of our braid, the regions of the first row of the first block are ordered by  $R_1,\ldots,R_{q_1-1}$, while the second block, as well as any block after, has an extra region before the first twist in the block. Therefore the regions of the first row of the $s$-th block are ordered by $R_{\sum_{i\le s-1} q_i},\ldots,R_{\sum_{i\le s} q_i-1}$ for $s\ge 2$. Note that the head of the $(s+1)$-th block, i.e.\ $R_{\sum_{i\le s} q_i}$, may cover a region extending to the left of the block, and hence stay above some of  $R_*$ before  $R_{\sum_{i\le s} q_i}$ if $p_{s+1}>p_s$. See Figure~\ref{figure:upperregion} for an example. 
 
 \begin{figure}[htb] {\small
\begin{overpic}%[grid,tics=10]
{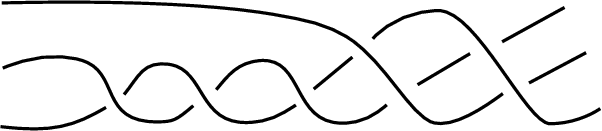}
\put(20, 15){$R_3$}
\put(70, 15){$R_4$}
\put(115, 15){$R_5$}
\put(130, 40){$R_6$}
\put(200, 40){$R_7$}
\put(260, 40){$R_8$}
\end{overpic}}
\caption{ A part of the braid area when $p_{3}>p_2$. }
\label{figure:upperregion}
\end{figure}
 
From \eqref{eqn:mu}, we have
 $$\cI_{\Lambda}(\mu)_{R_j}=-\frac{p-1}{(p-1)(q_1-1)+\sum_{i=2}^N(p_i-1)q_i}, \quad 1\le j \le q_1-1,$$
 and 
 $$\cI_{\Lambda}(\mu)_{R_j}=-\frac{p_i-1}{(p-1)(q_1-1)+\sum_{i=2}^N(p_i-1)q_i}, \quad \sum_{s=1}^{i}q_{s}\le j \le \sum_{s=1}^{i+1}q_{s}-1, \quad i\geq1.$$
As a consequence, we have
$$\sum_{i=1}^{\sum_{s=1}^N q_s-1}\cI_{\Lambda}(\mu)_{R_i}=-1.$$
Similar to the proof of \Cref{lemma:sum}, we have:
\begin{lemma}\label{lemma:top_row}
    Let $P$ be the push-out of a chord in the braid area. Then $$\sum_{i=1}^{\sum_{s=1}^N q_s-1}\cI_{\Lambda}(P)_{R_i}\ge 0. $$ Moreover, the equality holds if and only if $P$ does not contain the most interior long overhead arc.
\end{lemma}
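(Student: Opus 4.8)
The plan is to mimic the structure of the proof of \Cref{lemma:sum} from the torus knot case, tracking how each elementary piece of the push-out $P$ contributes to the sum $\sum_i \cI_{\Lambda}(P)_{R_i}$ over the top-row regions. I would consider a single strand of $P$, i.e.\ a piece that traverses the braid area once, and decompose its total contribution into four kinds of local contributions: step-ups (including the crossing at the chord being pushed out), crossings at the $\alpha_i$ above the braid area, horizontal pieces in the braid area, and the long overhead arcs. The key arithmetic input is the two-part formula for $\cI_{\Lambda}(\mu)_{R_j}$ displayed just before the lemma, together with the normalization $\sum_i \cI_{\Lambda}(\mu)_{R_i}=-1$; the latter is exactly what makes each $\mu$-contribution from a step-up count as $-1$ toward the full top-row sum, independently of which block the step-up sits in.

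First I would establish that each step-up contributes $-1$ to $\sum_i \cI_{\Lambda}(P)_{R_i}$ and each $\alpha_i$-crossing contributes $+1$, exactly as in \Cref{lemma:sum}, since both produce linking with $\mu$ and we sum the $R_i$-coefficients of $\cI_{\Lambda}(\mu)$ to $-1$. Next I would show that a horizontal piece in the braid area contributes $+1$ to the top-row sum precisely when it is immediately followed by a step-up (in the left-to-right orientation of the braid). The bookkeeping then runs as in the torus case: each contributing horizontal piece cancels against the step-up that follows it, and the positive contribution from an $\alpha_i$-crossing cancels against the negative contribution of the step-up coming off the leftmost descending arc, whenever such a step-up is present. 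This yields $\sum_i \cI_{\Lambda}(P)_{R_i}\ge 0$, with equality forced exactly when no cancellation is ``wasted,'' i.e.\ when the strand steps up at the leftmost descending arc rather than descending it completely — which is the statement that $P$ does not contain the most interior long overhead arc.

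The one genuinely new feature compared with the torus knot situation is the non-uniform weighting of the regions $R_j$: the coefficient $\cI_{\Lambda}(\mu)_{R_j}$ depends on which block $R_j$ lies in (it is $-\tfrac{p-1}{D}$ in the first block and $-\tfrac{p_i-1}{D}$ in the $i$-th block, with $D$ the common denominator $tb(\Lambda)+1$), together with the possibility, illustrated in \Cref{figure:upperregion}, that the head of a later block covers regions extending to the left when $p_{s+1}>p_s$. I expect the main obstacle to be verifying that, despite these block-dependent weights and the overhanging region phenomenon, the per-step-up contribution to the \emph{total} top-row sum is still exactly $-1$ and the cancellation pattern goes through uniformly. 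The point to check carefully is that summing over \emph{all} of $R_1,\ldots,R_{\sum q_s-1}$ collapses the varying weights via $\sum_i \cI_{\Lambda}(\mu)_{R_i}=-1$, so that the argument is insensitive to the internal block structure; once this is confirmed, the equality case is characterized exactly as before by the presence of the most interior long overhead arc.
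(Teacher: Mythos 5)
Your proposal is correct and follows essentially the same route as the paper, which itself proves this lemma simply by declaring it ``similar to the proof of \Cref{lemma:sum}'': you carry out exactly that adaptation, with the same strand-by-strand bookkeeping of step-ups, $\alpha_i$-crossings, and horizontal pieces, and you correctly identify the one new ingredient, namely that the normalization $\sum_i \cI_{\Lambda}(\mu)_{R_i}=-1$ (with denominator $tb(\Lambda)+1$) makes each unit of linking with $\mu$ contribute exactly $-1$ to the total top-row sum despite the block-dependent weights. Your flagged concern about the overhanging regions when $p_{s+1}>p_s$ is the right thing to check, and it is resolved precisely by the fact that the inequality is stated for the \emph{sum} over all top-row regions rather than region by region.
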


\begin{proposition}\label{pro:q1>>0}
    If $q_1\gg 0$, then $(S^3_{\Lambda^+}, \xi_{\Lambda^+})$ has vanishing contact homology.
\end{proposition}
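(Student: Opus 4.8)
The plan is to run the argument of \Cref{thm:main} with the first block---a full-width $(p,q_1)$ torus braid---playing the role that the entire torus braid played there. First I would present $\Lambda$ so that the area of $B_1$ is much smaller than those of $B_2,\ldots,B_p$. By the period estimate \Cref{prop:period}, $\partial_{CH}(q_{(\alpha_1)})$ then involves only orbits assembled from the braid-area chords $r_{i,j}$, and the negative-push-out computation of the torus case still yields $\cI_{\Lambda}(\alpha_1)=B_1$. By \Cref{prop:positive} the SFT-degree-zero part of the differential is generated by orbits from single chords, and by \Cref{thm:curve} the constant term of $\partial_{CH}(q_{(\alpha_1)})$ is $\pm1$. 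Thus, exactly as for \Cref{claim}, the vanishing of the contact homology of $(\R^3_{\Lambda^+},\xi_{\Lambda^+})$ reduces to showing that
$$\cI_{\Lambda}(\alpha_1)-\sum_{a,b} x_{a,b}\,\cI_{\Lambda}(r_{a,b})\ge 0,\qquad x_{a,b}\ge 0,$$
has only the trivial solution $x_{a,b}=0$.

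I would then extract the solution in two steps, both reading off a single coefficient of the inequality (recall that $\cI_\Lambda(\alpha_1)=B_1$ has vanishing $R_i$- and $A_2$-coefficients). Summing the coefficients of the top-row regions $R_1,\ldots,R_{\sum_s q_s-1}$ and invoking \Cref{lemma:top_row}, nonnegativity of the total forces $x_{a,b}=0$ whenever the push-out $P_{a,b}$ contains the most interior long overhead arc, since each such chord contributes strictly positively while the others contribute $0$. For the surviving chords---those whose push-out omits that arc---I would establish the exact analogue of \Cref{lemma:A2}: because the first block has full width $p$, every strand of such a $P_{a,b}$ covers $A_2$ positively, and balancing the copies of $\mu$ (one per step-up, minus one per crossing at an $\alpha_i$) against $\cI_\Lambda(\mu)_{A_2}$ shows $\cI_{\Lambda}(r_{a,b})_{A_2}\ge 0$, with equality exactly when $P_{a,b}$ contains no complete descending arc. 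Reading off the $A_2$-coefficient then kills every surviving chord whose push-out contains a complete descending arc.

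What remains, and where I expect the main difficulty, is to show that the hypothesis $q_1\gg 0$ leaves no surviving chord whose push-out has neither the most interior long overhead arc nor a complete descending arc. The mechanism is that $P_{a,b}$, following the knot once around, must traverse the first block, where the strands of a positive torus braid descend monotonically; a single top-to-bottom descent occupies only about $p-1$ columns, and a single-chord push-out has at most $p$ strands through the block. Hence once $q_1$ exceeds a threshold depending on $p$ and the later-block data, any strand that does not remain at the top level throughout the first block has room to complete a full top-to-bottom descent, i.e.\ a complete descending arc. Making this pigeonhole precise is the crux: one must verify that ``omitting the most interior long overhead arc'' genuinely forces a descent \emph{inside the long first block} (rather than only a partial descent spread across later, possibly narrow, blocks), and must account for the local reconnection caused by the chord modification when $r_{a,b}$ itself lies in the first block.

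Granting this, every push-out either contains the most interior long overhead arc or a complete descending arc, so the two steps above give $x_{a,b}=0$ for all $a,b$. Then \Cref{prop:no_curve} together with \Cref{thm:curve} yields $\partial_{CH}(q_{(\alpha_1)})=\pm1$, and the contact homology of $(\R^3_{\Lambda^+},\xi_{\Lambda^+})$ vanishes.
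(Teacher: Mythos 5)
There is a genuine gap, and it sits exactly where you flag ``the crux.'' Your plan transplants \Cref{lemma:A2} verbatim, claiming $\cI_{\Lambda}(r_{a,b})_{A_2}\ge 0$ with equality precisely when $P_{a,b}$ has no complete descending arc, and then defers the remaining case to an unexecuted pigeonhole argument (``Granting this\ldots''). But the clean balance that powers \Cref{lemma:A2} is special to the full-width torus braid: there each strand covers $A_2$ once, carries at most $q-1$ copies of $\mu$, and $\cI_{\Lambda}(\mu)_{A_2}=-\tfrac{1}{q-1}$, so the two contributions cancel exactly in the extremal case. For the multi-block braid one has $\cI_{\Lambda}(\mu)_{A_2}=-(p-1)/D$ with $D=(p-1)(q_1-1)+\sum_{i\ge2}(p_i-1)q_i$, so a strand's contribution to $A_2$ is nonnegative only if it carries at most $D/(p-1)=(q_1-1)+\sum_{i\ge2}\tfrac{p_i-1}{p-1}q_i$ copies of $\mu$; since $p_2<p$ this threshold is strictly smaller than $\sum_i q_i-1$, the naive one-step-up-per-column bound. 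Hence neither the inequality nor the stated equality characterization comes for free: you need an actual count of step-ups per strand (savings of order $\lfloor q_1/p\rfloor$ from forced descents in the first block) and the hypothesis $q_1\gg0$ to close the estimate --- and that computation, which is the only place $q_1\gg 0$ genuinely enters, is precisely what your proposal omits. The pigeonhole step you leave open would, if true, make your equality case vacuous anyway, which is a sign the argument is not organized around the right estimate.

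The paper's proof makes exactly one quantitative estimate and needs no dichotomy or pigeonhole: for any push-out $P$ avoiding the most interior long overhead arc, each strand contributes $1$ to $\cI_{\Lambda}(P)_{B_1}$, has at most $\sum_{i\ge2}q_i+q_1+2-\lfloor q_1/p\rfloor$ step-ups, and $\cI_{\Lambda}(\mu)_{B_1}=-(p-2)/D$; the per-strand total is therefore at least
$$1-\frac{(p-2)\bigl(\sum_{i\ge2}q_i+q_1+1-\lfloor q_1/p\rfloor\bigr)}{D}\;\longrightarrow\;1-\frac{(p-2)(1-\tfrac1p)}{p-1}>0$$
as $q_1\to\infty$, so every surviving $\cI_{\Lambda}(r_i)_{B_1}$ is strictly positive and all remaining variables die at once from the $B_1$-coefficient. (The paper also works with $\alpha_2$ rather than $\alpha_1$, keeping the chords $\alpha_j$ as variables $y_j$ and disposing of them via the $B_1$ and $B_3,\ldots,B_p$ coefficients; your variant of shrinking $B_1$ and using $\alpha_1$ with the $A_2$-coefficient is structurally admissible, but it does not exempt you from proving the analogous per-strand bound.) To repair your proof, replace the claimed ``exact analogue of \Cref{lemma:A2}'' and the pigeonhole by this explicit step-up count for $q_1\gg0$.
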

\begin{proof}
    We consider those push-outs $P$ that do not contain the most interior long overhead arc. Each strand contributes $1$ to $\cI_{\Lambda}(P)_{B_1}$. Note that 
    $$\cI_{\Lambda}(\mu)_{B_1}=-\frac{p-2}{(p-1)(q_1-1)+\sum_{i=2}^N(p_i-1)q_i}.$$
   Each strand has at most $\sum_{i=2}^Nq_i+q_1+2-\lfloor q_1/p\rfloor$ step-ups. This is a fairly loose inequality, and our primary concern is the leading coefficient of $q_1$. To see the bound, the free upper bound for step-ups is given by $\sum_{i=1}^N q_i$, meaning that each column of twist can contribute at most one step-up. If the Reeb chord is not in the first block of twists, then at least $\lfloor q_1/p\rfloor$ columns from the first block do not contribute. If the Reeb chord is in the first block of twists, then at least $\lfloor q_1/p\rfloor-2$ columns do not contribute. Along with the crossing at $\alpha_i$, we see that the contribution of $\mu$ from linking to $\cI_{\Lambda}(P)_{B_1}$ is at least 
    \begin{equation}\label{eqn:bound}
        -\frac{(p-2)(\sum_{i=2}^Nq_i+q_1+1-\lfloor q_1/p\rfloor)}{(p-1)(q_1-1)+\sum_{i=2}^N(p_i-1)q_i}.
    \end{equation}
    We have
    $$\lim_{q_1\to \infty} -\frac{(p-2)(\sum_{i=2}^Nq_i+q_1+1-\lfloor q_1/p\rfloor)}{(p-1)(q_1-1)+\sum_{i=2}^N(p_i-1)q_i}=-\frac{(p-2)(1-\frac{1}{p})}{p-1}>-1.$$
    Therefore, for $q_1\gg 1$, we have \eqref{eqn:bound} $> -1$.

    We will argue as in \Cref{thm:main} to show that $\partial_{CH}(q_{(\alpha_2)})=\pm 1$.  In light of \Cref{prop:positive}, it suffices to establish the analogue of Claim \ref{claim}, i.e.\
    $$\cI_{\Lambda}(\alpha_2)-\sum_{\substack{\text{Reeb chord }r_i \\ \text{ in the braid area}}} x_i \cI_{\Lambda}(r_i)-\sum_{1\le j \le p, j\ne 2} y_j \cI_{\Lambda}(\alpha_j)\ge 0,\quad x_i,y_j\ge 0,$$
    has only a trivial solution $x_i=y_j=0$ for any $i$ and $j$. Since $\cI_{\Lambda}(\alpha_i)=B_i$, by \Cref{lemma:top_row}, from looking at the sum of the coefficients of $R_i$, we have $x_i=0$ if $P_{r_i}$ contains the most interior long overhead arc. Then we can look at the coefficient of $B_1$, the discussion above shows that each strand not containing the most interior long overhead arc will contribute positively to $B_1$. Therefore we must have the rest $x_i=0$ and $y_1=0$. Finally, we can look at the coefficient of $B_3,\ldots, B_p$ to conclude that $y_3=\ldots=y_p=0$. Therefore $\partial_{CH}(q_{(\alpha_2)})$ only has constant terms and the constant term is $\pm 1$ by \Cref{thm:curve}.
\end{proof}

\begin{proposition}\label{pro:N=2}
    Let $\Lambda$ be a Legendrian knot which is the rainbow closure of the braid $$(\sigma_{1}\cdots\sigma_{p-2}\sigma_{p-1})^{q}(\sigma_{2}\cdots\sigma_{p-2}\sigma_{p-1})^{r}.$$
    %$(p,q)$ torus knot followed by a $(p-1,r)$ torus knot on the bottom $p-1$ adjacent strands. 
    Then $(S^3_{\Lambda^+}, \xi_{\Lambda^+})$ has vanishing contact homology if $q>1$.
\end{proposition}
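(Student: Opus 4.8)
The plan is to run the argument of \Cref{pro:q1>>0} essentially verbatim, isolating the single place where the weaker hypothesis $q>1$ (in place of $q_1\gg0$) is needed. Specializing to $N=2$, $p_1=p$, $p_2=p-1$, $q_1=q$, $q_2=r$, so that $tb(\Lambda)+1=(p-1)(q-1)+(p-2)r$, the goal is to prove $\partial_{CH}(q_{(\alpha_2)})=\pm1$. By \Cref{prop:positive} the SFT-degree-$0$ part is generated by single-chord orbits, so by \Cref{prop:no_curve} together with $\cI_\Lambda(\alpha_i)=B_i$ it suffices to show that
$$\cI_\Lambda(\alpha_2)-\sum_{\text{braid }r_i}x_i\,\cI_\Lambda(r_i)-\sum_{j\ne2}y_j\,\cI_\Lambda(\alpha_j)\ge0,\qquad x_i,y_j\ge0,$$
admits only the trivial solution; the constant term $\pm1$ is then supplied by \Cref{thm:curve}. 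Exactly as in \Cref{pro:q1>>0}, \Cref{lemma:top_row} (the sum of the $R_i$-coefficients) kills every $x_i$ whose push-out $P_{r_i}$ contains the most interior long overhead arc, the coefficient of $B_1$ will dispose of the remaining $x_i$ together with $y_1$, and the coefficients of $B_3,\dots,B_p$ then force $y_3=\dots=y_p=0$. These steps are formally identical to the ones already carried out.

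The only step requiring a new idea is the $B_1$-positivity: I must show that every remaining braid chord---equivalently every push-out $P$ not containing the most interior long overhead arc---satisfies $\cI_\Lambda(P)_{B_1}>0$ already when $q>1$. Each strand of such a $P$ covers $B_1$ once positively, contributing $+1$, while the $\mu$-linking contributes $-\tfrac{p-2}{tb(\Lambda)+1}$ per net copy of $\mu$; positivity of the strand is thus equivalent to the net number of $\mu$'s being strictly less than $\tfrac{tb(\Lambda)+1}{p-2}=\tfrac{(p-1)(q-1)}{p-2}+r$. The loose count behind \eqref{eqn:bound}, which only bounds the net number of $\mu$'s by $r+q+1-\lfloor q/p\rfloor$, is not sharp enough: for $q=2$ this bound exceeds $\tfrac{(p-1)(q-1)}{p-2}+r$ for every $p\ge3$, which is precisely why \Cref{pro:q1>>0} is stated only for $q_1\gg0$. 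The refinement I would establish is a sharper per-strand estimate adapted to the two-block geometry with $p_2=p-1$: a strand avoiding the most interior long overhead arc cannot step up in every column of the first block, and once that block carries $q\ge2$ full twists the strand is forced to forgo roughly two step-ups, pushing its net $\mu$-count strictly below $\tfrac{(p-1)(q-1)}{p-2}+r$. The value $q=1$ is exactly where this saving evaporates, which matches the hypothesis $q>1$.

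The main obstacle is this sharpened step-up count. It comes down to a careful combinatorial analysis of how a single strand threads the full first block and the thinner second block, in the spirit of the classification at the end of the proof of \Cref{thm:main}, where ``one strand'' together with ``no complete descending arc'' was shown to be incompatible with $q>p$; here the incompatibility is with $q>1$. The bookkeeping is complicated by the $r$ partial twists of the second block, which is also why one must argue through $B_1$ rather than through the clean cancellation in the region $A_2$ used for torus knots---that cancellation breaks down as soon as $r>0$, since a strand can then accumulate more copies of $\mu$ than $A_2$ can absorb. Once the sharp $B_1$-positivity is in hand, the $B_1$-coefficient inequality $-\sum_i x_i\,\cI_\Lambda(r_i)_{B_1}-y_1\ge0$ forces the surviving $x_i$ and $y_1$ to vanish, the coefficients of $B_3,\dots,B_p$ finish the $y_j$, and hence $\partial_{CH}(q_{(\alpha_2)})=\pm1$. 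By \cite[Theorem 2.6(4)]{avdek2020combinatorial} this yields the vanishing of the contact homology of $(S^3_{\Lambda^+},\xi_{\Lambda^+})$.
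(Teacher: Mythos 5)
Your overall strategy matches the paper's: reduce to showing that every push-out $P$ of a braid-area chord not containing the most interior long overhead arc satisfies $\cI_{\Lambda}(P)_{B_1}>0$, then run the $R_i$-sum, $B_1$, and $B_3,\dots,B_p$ eliminations exactly as in \Cref{pro:q1>>0}. However, the one step you flag as ``the main obstacle'' --- a sharpened per-strand step-up count exploiting the two-block geometry --- is left entirely unproven (``forced to forgo roughly two step-ups'' is a hope, not an argument), and it is also a misdiagnosis of where the difficulty lies. You correctly identify the threshold: the net number of copies of $\mu$ per strand must be strictly less than $\frac{(p-1)(q-1)}{p-2}+r$. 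But you then compare this against the bound $r+q+1-\lfloor q/p\rfloor$ inherited from \eqref{eqn:bound}, which carries the $+2$ correction designed for the regime $q_1\gg 0$, and conclude that a finer combinatorial refinement is required. It is not. The paper simply uses the free bound of at most one step-up per column, i.e.\ at most $q+r$ step-ups per strand; after subtracting one for the crossing at $\alpha_i$, this gives at most $q+r-1$ net copies of $\mu$, and
$$\bigl((p-1)(q-1)+(p-2)r\bigr)-(p-2)(q+r-1)=q-1,$$
so that
$$\cI_{\Lambda}(P)_{B_1}\;\ge\;1-\frac{(p-2)(q+r-1)}{(p-1)(q-1)+(p-2)r}\;=\;\frac{q-1}{(p-1)(q-1)+(p-2)r},$$
which is strictly positive precisely when $q>1$. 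This is exactly where the hypothesis $p_2=p-1$ enters: the coefficient $p_2-1=p-2$ of $r$ in $tb(\Lambda)+1$ matches the $p-2$ appearing in $\cI_{\Lambda}(\mu)_{B_1}$, so the $r$-dependence cancels and only the surplus $q-1$ from the first block survives.

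So the gap is concrete: without the elementary computation above (or your promised but unexecuted refinement), the $B_1$-positivity --- and with it the vanishing of the remaining $x_i$ and of $y_1$ --- is not established, and the proof does not close. Everything else in your outline is sound and coincides with the paper's argument.
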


\begin{proof}
    We consider those push-outs $P$ that do not contain the most interior long overhead arc. Each strand contributes $1$ to $\cI_{\Lambda}(P)_{B_1}$. Note that 
    $$\cI_{\Lambda}(\mu)_{B_1}=-\frac{p-2}{(p-1)(q-1)+(p-2)r}.$$
    Since each strand has at most $q+r$ step-ups, along with the crossing at $\alpha_i$, we see that the contribution of $\mu$ from linking to $\cI_{\Lambda}(P)_{B_1}$ is at least 
    $$-\frac{(p-2)(q+r-1)}{(p-1)(q-1)+(p-2)r}\ge -1+\frac{q-1}{(p-1)(q-1)+(p-2)r}.$$
    In particular,  we have 
    $$\cI_{\Lambda}(P)_{B_1}\ge \frac{q-1}{(p-1)(q-1)+(p-2)r}\ge 0.$$
    Since $\cI_{\Lambda}(\alpha_i)=B_i$, we can argue as in \Cref{pro:q1>>0} that $\partial_{CH}(q_{(\alpha_2)})=\pm 1$ when $q>1$. 
\end{proof}

\begin{proof}[Proof of Theorem~\ref{thm:2}]
Since $q_1>1$, $tb(\Lambda)=\sum_{i=1}^N (p_i-1)q_i-p_{1}\neq -1$. 
By the same argument as in the proof of Theorem~\ref{thm:main}, the theorem follows from Lemma~\ref{lem:tight}, Proposition~\ref{pro:q1>>0} and Proposition~\ref{pro:N=2}.
\end{proof} 

\section{ Chekanov's Legendrian  $5_2$ knots}
We consider  Chekanov's two  Legendrian $5_2$ knots  \cite[4.4]{Ch}.  Both Legendrian knots have Thurston-Bennequin invariants $1$ and rotation numbers $0$, yet they are not Legendrian isotopic. Note that the $5_2$ knot cannot be the closure of a positive braid. The Lagrangian projections of these two Legendrian knots are illustrated in Figure~\ref{fig:52}.
\begin{figure}[htb] {\small
\begin{overpic}[scale=0.5]
{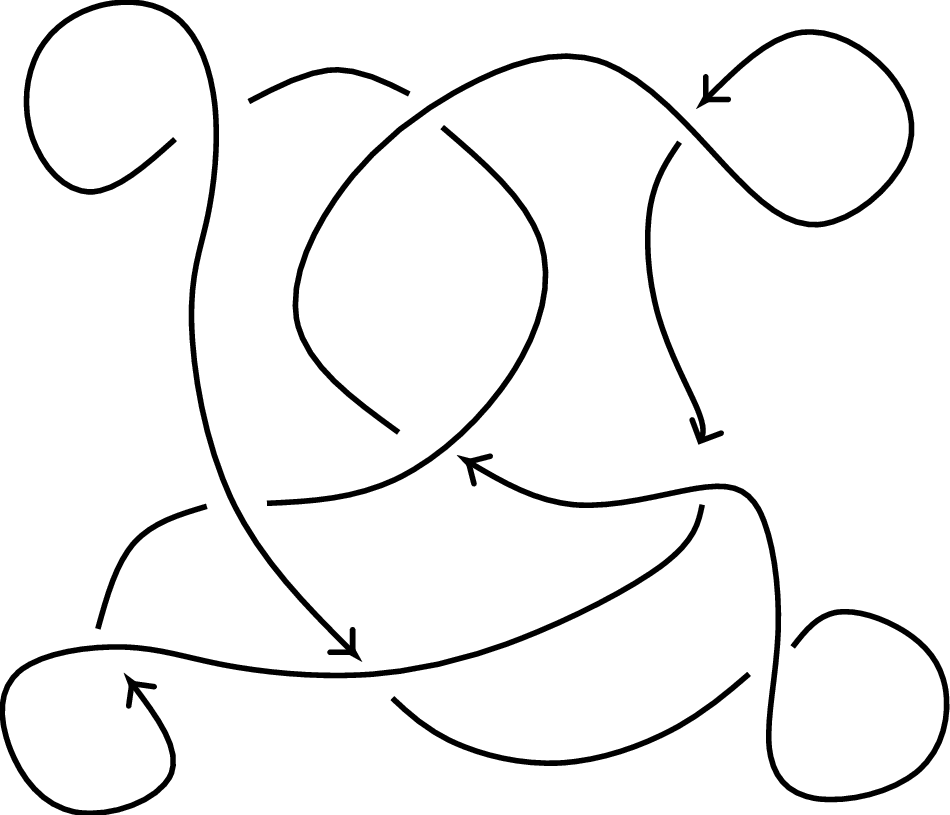}
\put (54,165) {$a_1$}
\put (175,165) {$a_2$}
\put (26,44) {$a_3$}
\put (190,35) {$a_4$}
\put (110,168) {$a_5$}
\put (100,93) {$a_6$}
\put (60,80) {$a_7$}
\put (90,40) {$a_8$}
\put (160,85) {$a_9$}
\put (20,175) {$A_1$}
\put (195,170) {$A_2$}
\put (15,15) {$A_3$}
\put (200,20) {$A_4$}
\put (95,130) {$B_1$}
\put (55,120) {$B_2$}
\put (140,120) {$B_3$}
\put (105,60) {$B_4$}
\put (50,50) {$B_5$}
\put (140,30) {$B_6$}
\end{overpic}
\quad 
\begin{overpic}[scale=0.5]
{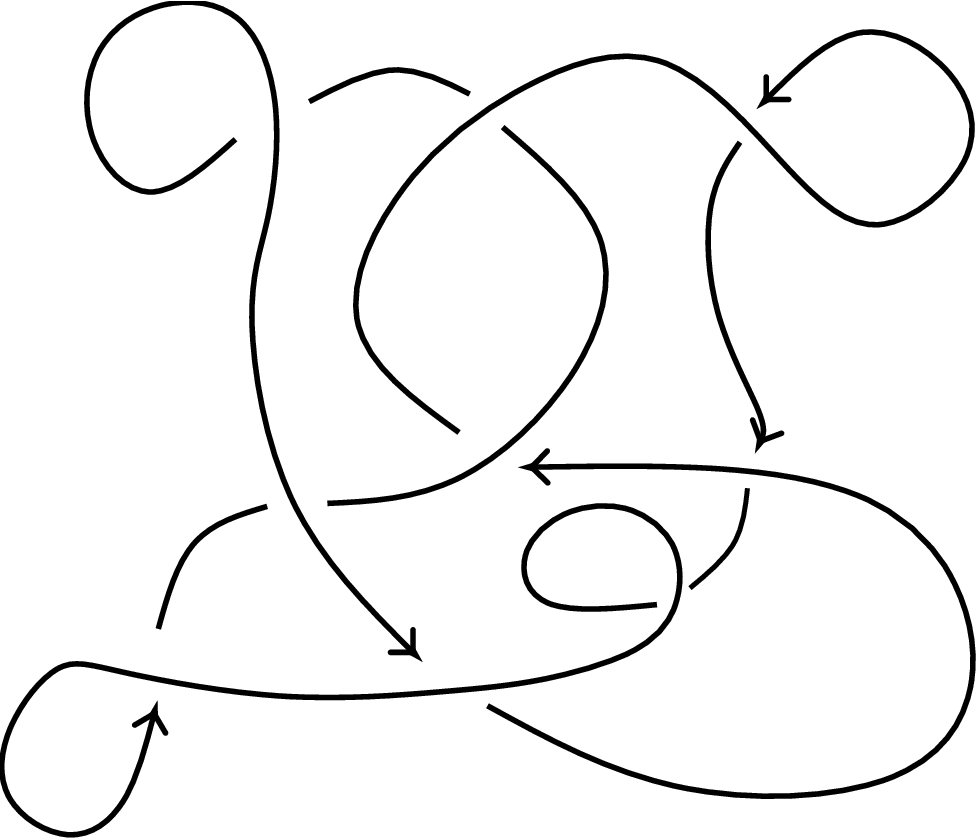}
\put (55,160) {$a_1$}
\put (177,160) {$a_2$}
\put (26,44) {$a_3$}
\put (165,55) {$a_4$}
\put (110,165) {$a_5$}
\put (100,93) {$a_6$}
\put (62,72) {$a_7$}
\put (105,40) {$a_8$}
\put (168,80) {$a_9$}
\put (32,175) {$A_1$}
\put (210,170) {$A_2$}
\put (10,15) {$A_3$}
\put (135,65) {$A_4$}
\put (110,130) {$B_1$}
\put (65,120) {$B_2$}
\put (150,120) {$B_3$}
\put (100,60) {$B_4$}
\put (60,50) {$B_5$}
\put (200,40) {$B_6$}
\end{overpic}}
\caption{ Chekanov's two  Legendrian $5_2$ knots.  }
\label{fig:52}
\end{figure}
One challenge for a general knot is that \Cref{prop:positive} fails in general, e.g.\ the $5_2$ knots above. Therefore the subalgebra of SFT degree $0$ may have more general generators and often infinitely many generators.
\begin{example}\label{ex:non-add}
    We label the crossings and regions of the two Lagrangian projections according to  Figure~\ref{fig:52}. For the left Legendrian $5_2$ knot, we compute that
    \begin{eqnarray*}
        \cI_{\Lambda}(\mu) & = & -\frac{1}{2}(-A_1+A_2+A_3-A_4+B_2-B_3-B_5+B_6);\\
        \cI_{\Lambda}(a_8) & = & -A_1+A_3+B_1+B_2-B_5+\cI_{\Lambda}(\mu);\\
        \cI_{\Lambda}(a_9) & = & A_2-B_1-B_3+\cI_{\Lambda}(\mu)\\
        \cI_{\Lambda}(a_8) + \cI_{\Lambda}(a_9) & = & A_4-B_6\\
        \cI_{\Lambda}(a_8a_9) & = & A_4-B_6+\cI_{\Lambda}(\mu).
    \end{eqnarray*}
    In particular, $\cI_{\Lambda}(a_8a_9)\ne \cI_{\Lambda}(a_8) + \cI_{\Lambda}(a_9)$. 
\end{example}
Because of the example above, computing $\cI_{\Lambda}(c)$ for single chords is insufficient to determine the intersection grading on the subalgebra of SFT degree $0$, which as mentioned above could have infinite generators. This poses serious challenges to execute Avdek's machinery in general.

\begin{proposition}
 Let $\Lambda$ be one of Chekanov's two  Legendrian $5_2$ knots in $(S^{3}, \xi_{std})$, then $(S^{3}_{1/k}(\Lambda), \xi_{1/k}(\Lambda))$ is algebraically overtwisted and tight  for $k\in \N_+$.
\end{proposition}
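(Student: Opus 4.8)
The two assertions rest on different mechanisms: tightness follows from a genus computation together with \cite{LS}, while algebraic overtwistedness is proved by a contact homology computation in the spirit of \Cref{thm:main}. Beginning with tightness, each of Chekanov's knots has $tb(\Lambda)=1$, and the $5_2$ knot has slice genus $g_s(\Lambda)=1$ (indeed $g_s\ge|\sigma(5_2)|/2=1$ and $g_s\le g_3=1$), so $tb(\Lambda)=2g_s(\Lambda)-1$ with $g_s(\Lambda)>0$. Since $5_2$ is not a positive braid closure, \Cref{lem:tight} does not apply directly; however, the reasoning at the end of the proof of \Cref{thm:main}, via the proof of \cite[Theorem 1.1]{LS}, applies once $tb(\Lambda)=2g_s(\Lambda)-1$ and $g_s(\Lambda)>0$, giving tightness of $(S^3_{\Lambda^+},\xi_{\Lambda^+})$ and of every contact $1/k$ surgery along $\Lambda$.

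For algebraic overtwistedness it suffices, by \cite[Theorem 2.6(4)]{avdek2020combinatorial} together with the Liouville cobordism and functoriality argument from \Cref{thm:main} (using \cite{P,BH}), to show that the contact homology of $(\R^3_{\Lambda^+},\xi_{\Lambda^+})$ vanishes, and my plan is to exhibit a chord $c$ with $\partial_{CH}(q_{(c)})=\pm1$. I would take $c$ to be a chord bounding one of the innermost regions $B_i$ of the Lagrangian projections in \Cref{fig:52}, so that $c$ is the positive asymptotic of an embedded RSFT disk; \Cref{thm:curve} then gives that the constant term of $\partial_{CH}(q_{(c)})$ is $\pm1$. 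To eliminate the remaining terms I would normalize the projection so that this region has the smallest area. For any non-constant output $q_{\gamma_1}\cdots q_{\gamma_m}$, positivity of the holomorphic energy yields $\sum_j\cA(\gamma_j)<\cA((c))$, and \Cref{prop:period} with $\epsilon\ll1$ then bounds the total word length $\sum_j\wl(\gamma_j)$; as only finitely many Reeb orbits lie below a fixed action, only finitely many monomials can appear as outputs.

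The main obstacle, which is precisely what makes this case harder than \Cref{thm:main,thm:2}, is that \Cref{prop:positive} fails for the $5_2$ knots: the SFT degree $0$ subalgebra is not generated by single-chord orbits, and \Cref{ex:non-add} shows that $\cI_\Lambda$ is not additive under concatenation, so the grading of a multi-chord output $\gamma_j$ cannot be recovered from the individual $\cI_\Lambda(a_i)$. I would therefore compute $\cI_\Lambda(\gamma_j)\in\Q\langle A_i,B_i\rangle$ directly from the push-out $P(\cw(\gamma_j))$ for each of the finitely many candidates surviving the action bound, extending the computations of \Cref{ex:non-add} to all relevant chords and to the short cyclic words among them. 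The statement to verify is then purely linear-algebraic: the constraint $\cI_\Lambda(c)-\sum_j\cI_\Lambda(\gamma_j)\ge0$ from \Cref{prop:no_curve}, read coefficient-by-coefficient in the $A_i$ and $B_i$, forces every $\gamma_j$ to be absent. I expect the decisive cancellations to come from the $A_i$-coefficients, which carry mixed signs in $\cI_\Lambda(\mu)$ and hence in the chord gradings, just as the sign constraints on the region coefficients forced the trivial solution in the proof of \Cref{thm:main}.

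Carrying this out for both Lagrangian projections in \Cref{fig:52} gives $\partial_{CH}(q_{(c)})=\pm1$ in each case, so the unit is a boundary and the contact homology of $(\R^3_{\Lambda^+},\xi_{\Lambda^+})$, hence of $(S^3_{\Lambda^+},\xi_{\Lambda^+})$, vanishes; the transfer to $(S^3_{1/k}(\Lambda),\xi_{1/k}(\Lambda))$ for all $k\in\N_+$ is then identical to the corresponding step of \Cref{thm:main}. Together with the tightness established above, this proves the proposition.
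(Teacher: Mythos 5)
Your proposal follows essentially the same route as the paper: tightness via $tb(\Lambda)=2g_s(\Lambda)-1$ and \cite{LS}, and vanishing of contact homology by selecting a chord that is the unique positive corner of an embedded RSFT disk (the paper takes $a_4$), normalizing areas so that the action filtration together with \Cref{prop:period} leaves only the finitely many candidate outputs $q_{(a_8)}$, $q_{(a_9)}$, $q_{(a_8^2)}$, $q_{(a_9^2)}$, $q_{(a_8a_9)}$, and then excluding these by directly computed intersection gradings of the (possibly multi-chord) words and \Cref{prop:no_curve}. The only caveats are that the region you pick must be one with a single positive corner (one of the $A_i$ in \Cref{fig:52}, not an ``innermost $B_i$'', since those carry negative corners and are not RSFT disks in the sense of \Cref{thm:curve}), and that the decisive finite computation, which the paper carries out explicitly, is left in your write-up as a plan rather than executed.
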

\begin{proof}
Since $tb(\Lambda)=1$ and $g_{s}(\Lambda)=1$, $tb(\Lambda)=2g_{s}(\Lambda)-1$. By \cite{LS}, $(S^{3}_{1/k}(\Lambda), \xi_{1/k}(\Lambda))$ is tight  for $k\in \N_+$. To show that $(S^{3}_{1/k}(\Lambda), \xi_{1/k}(\Lambda))$ is algebraically overtwisted, it suffices to show that $(S^3_{\Lambda^+},\xi_{\Lambda^+})$ is algebraically overtwisted.

We label the crossings and regions of the two Lagrangian projections according to  Figure~\ref{fig:52}. For the left Legendrian $5_2$ knot in Figure~\ref{fig:52}, we have the following non-trivial constraints for the contact action of Reeb chords.
\begin{eqnarray*}
\cA(a_2)-\cA(a_5)-\cA(a_6)-\cA(a_9)&=&Area(B_3)>0;\\
\cA(a_1)-\cA(a_5)-\cA(a_6)-\cA(a_7)&=&Area(B_2)>0;\\
\cA(a_4)-\cA(a_8)-\cA(a_9)&=&Area(B_6)>0;\\
\cA(a_3)-\cA(a_8)-\cA(a_7)&=&Area(B_5)>0.
\end{eqnarray*}
By arranging $B_6$ with small area $\epsilon$, we can achieve 
$$0\ll \cA(a_8)=\cA(a_9)<\cA(a_4)=2\cA(a_8)+\epsilon\ll \cA(a_7)\ll \cA(a_3)\ll \cA(a_5)\ll \cA(a_6)\ll \cA(a_2)\ll \cA(a_1).$$
Then by \Cref{prop:period}, $\partial_{CH}(q_{(a_4)})$ can have possibly nonzero coefficients only in $1, q_{(a_8)}$,  $q_{(a_9)}$,  $q_{(a_8^2)}$, $q_{(a_9^2)}$, $q_{(a_8a_9)}$. As we computed in \Cref{ex:non-add} that 
\begin{eqnarray*}
        \cI_{\Lambda}(a_8) & = &-\frac{1}{2}A_1-\frac{1}{2}A_2+\frac{1}{2}A_3+\frac{1}{2}A_4+B_1+\frac{1}{2}B_2+\frac{1}{2}B_3-\frac{1}{2}B_5-\frac{1}{2}B_6;\\
        \cI_{\Lambda}(a_9) & = & \frac{1}{2}A_1+\frac{1}{2}A_2-\frac{1}{2}A_3+\frac{1}{2}A_4-B_1-\frac{1}{2}B_2-\frac{1}{2}B_3+\frac{1}{2}B_5-\frac{1}{2}B_6;\\
        \cI_{\Lambda}(a_8^2) & = & 2\cI_{\Lambda}(a_8);\\
         \cI_{\Lambda}(a_9^2) & = & 2\cI_{\Lambda}(a_8);\\
        \cI_{\Lambda}(a_8a_9) & = & A_4-B_6+\cI_{\Lambda}(\mu).
\end{eqnarray*}
Each of them carries a positive coefficient for a term that is not $A_4$. Consequently, in light of \Cref{prop:no_curve} and \Cref{thm:curve}, we have  $\partial_{CH}(q_{(a_4)})=\pm 1$. Therefore, the contact $+1$ surgery along $\Lambda$ yields an algebraically overtwisted manifold.

For the right Legendrian $5_2$ knot in Figure~\ref{fig:52}, we have the following non-trivial constraints for the contact action of Reeb chords.
\begin{eqnarray*}
\cA(a_2)-\cA(a_5)-\cA(a_6)-\cA(a_9)&=&Area(B_3)>0;\\
\cA(a_1)-\cA(a_5)-\cA(a_6)-\cA(a_7)&=&Area(B_2)>0;\\
\cA(a_4)-\cA(a_8)-\cA(a_9)&=&Area(B_6)>0;\\
\cA(a_3)-\cA(a_8)-\cA(a_7)&=&Area(B_5)>0;\\
\cA(a_6)+\cA(a_7)+\cA(a_8)+\cA(a_9)-2\cA(a_4) & = & Area(B_4)>0.
\end{eqnarray*}
We can still arrange that 
$$0\ll \cA(a_8)=\cA(a_9)<\cA(a_4)=2\cA(a_8)+\epsilon\ll \cA(a_7)\ll \cA(a_3)\ll \cA(a_5)\ll \cA(a_6)\ll \cA(a_2)\ll \cA(a_1).$$
Therefore $\partial_{CH}(q_{(a_4)})$ can  have possibly nonzero coefficients only in $1, q_{(a_8)}$,  $q_{(a_9)}$,  $q_{(a_8^2)}$, $q_{(a_9^2)}$, $q_{(a_8a_9)}$ as before.
\begin{eqnarray*}
        \cI_{\Lambda}(a_8) & = &-\frac{1}{2}A_1-\frac{1}{2}A_2+\frac{1}{2}A_3+\frac{1}{2}A_4+B_1+\frac{1}{2}B_2+\frac{1}{2}B_3-\frac{1}{2}B_5-\frac{1}{2}B_6;\\
        \cI_{\Lambda}(a_9) & = & \frac{1}{2}A_1+\frac{1}{2}A_2-\frac{1}{2}A_3+\frac{1}{2}A_4-B_1-\frac{1}{2}B_2-\frac{1}{2}B_3+\frac{1}{2}B_5-\frac{1}{2}B_6;\\
        \cI_{\Lambda}(a_8^2) & = & 2\cI_{\Lambda}(a_8);\\
         \cI_{\Lambda}(a_9^2) & = & 2\cI_{\Lambda}(a_8);\\
        \cI_{\Lambda}(a_8a_9) & = & A_4-B_6+\cI_{\Lambda}(\mu).
\end{eqnarray*}
Each of them has a positive coefficient for a term that is not $A_4$, we can argue as before that the contact $+1$ surgery along $\Lambda$ yields an algebraically overtwisted manifold.
\end{proof}
\bibliographystyle{alpha} 
\bibliography{ref}
\Addresses
\end{document}